\documentclass[leqno,12pt]{amsart}

\usepackage{amssymb}
\usepackage{amsmath}
\usepackage[usenames, dvipsnames]{color}
\usepackage{enumitem}
\usepackage{amsthm}
\usepackage{mathrsfs}

\numberwithin{equation}{section}

\usepackage{marginnote}
\usepackage{hyperref}
\usepackage{esint}
\usepackage{verbatim}

\usepackage{hyperref}

\usepackage{geometry} 

\usepackage{etoolbox}

\theoremstyle{plain}
\newtheorem{theorem}{Theorem}[section]
\newtheorem{lemma}[theorem]{Lemma}

\newtheorem{definition}[theorem]{Definition}

\newtheorem{example}[theorem]{Example}
\newtheorem{assumption}[theorem]{Assumption}
\newtheorem{remark}[theorem]{Remark}

\newcommand{\R}{\mathbb{R}}

\def\XXint#1#2#3{{\setbox0=\hbox{$#1{#2#3}{\int}$ }
\vcenter{\hbox{$#2#3$ }}\kern-.6\wd0}}

\title[ABP Estimates for Singular-Degenerate Equations]{On ABP Estimates for a Class of Fully Nonlinear Singular-Degenerate Elliptic Equations and Applications}

\author[J. Fang]{Junyuan Fang}
\address[J. Fang]{Department of Mathematics, Louisiana State University, 125B Prescott Hall,
Baton Rouge, LA 70803-4918}
\email{jfang8@lsu.edu}

\author[J. Peltier]{Jack Peltier}
\address[J. Peltier]{Division of Applied Mathematics, Box F 182 George Street, 
Brown University, Providence, RI 02912}
\email{jack\_peltier@brown.edu}

\author[T. Phan]{Tuoc Phan}
\address[T. Phan]{Department of Mathematics, University of Tennessee, 227 Ayres Hall,
1403 Circle Drive, Knoxville, TN 37996-1320}
\email{tphan2@utk.edu}

\subjclass[2020]{35B50, 35B45, 35J60, 35J96}
\keywords{ABP estimate, singular-degenerate coefficients, fully nonlinear equations, anisotropic minimal surface equations, Monge-Amp\`ere type equations}

\begin{document}
\begin{abstract} We establish an Alexandrov-Bakelman-Pucci type estimate for a class of fully nonlinear elliptic equations with singular or degenerate coefficients and nonlinear dependence on the gradient. The estimate bounds sub-solutions in terms of their boundary values and weighted integrals of the data. It holds for classical sub-solutions and for viscosity sub-solutions with locally Lipschitz gradients. The assumptions allow general rates of growth in the gradient, with a smallness condition on the data when the associated gradient weight has finite integral. The proof uses the classical weighted contact-set argument. As applications, we obtain sup-norm estimates for viscosity sub-solutions to a class of generalized Monge-Amp\`ere equations, Hessian equations, Hessian quotients under an additional condition on the contact Hessian, and anisotropic mean-curvature-type equations.
\end{abstract}
\maketitle
\section{Introduction and main results} Let  $\Omega \subset\mathbb{R}^n$ be an open bounded domain with boundary $\partial \Omega$. We study Alexandrov-Bakelman-Pucci (ABP) estimates for fully nonlinear equations of the form
\begin{equation}\label{main eqn}
    F(x, u(x), D u(x), D^2u(x)) + G(x, u(x), Du(x))= f(x)\quad \text{in}\quad \Omega,
\end{equation}
where $u: \Omega \rightarrow \mathbb{R}$ is the unknown function, $Du$ and $D^2u$ denote its gradient and Hessian, respectively, and $f : \Omega \rightarrow \mathbb{R}$ is a given continuous function. The operators $F$ and $G$ are defined as
\begin{align*}\label{operator F}
& \begin{array}{cccl}
F: & \Omega\times \R \times \mathbb{R}^n  \times S(n) &\longrightarrow &\mathbb{R}\\[4pt]
&(x, z, \xi, X)\quad &\longmapsto &F(x, z, \xi, X), \quad \text{and}
\end{array} \\
& \begin{array}{cccl}
G: & \Omega\times \R \times \mathbb{R}^n   &\longrightarrow &\mathbb{R}\\[4pt]
&(x, z, \xi)\quad &\longmapsto &G(x, z, \xi),
\end{array} 
\end{align*}
in which $S(n)$ denotes the set of $n \times n$ symmetric matrices.
A key feature of the equation \eqref{main eqn} we are considering is that it could be singular or degenerate in the sense that the leading operator $F$ loses uniform coercivity and its coefficients become unbounded. For example, in the scalar model $-a(x)\Delta u$, with $a(x)>0$, degeneracy and singularity correspond to $a(x)$ approaching zero and becoming unbounded, respectively.

\smallskip
Our structural assumptions on \eqref{main eqn}, \textup{Assumption \ref{F-assumption}} below, express the required determinant lower bound through a locally integrable spatial weight $\mu$ and a gradient profile $g_1$, while a second profile $g_2$ controls the allowable growth of the lower-order term $G$. Under these assumptions, we obtain an ABP estimate for classical subsolutions and for viscosity subsolutions with locally Lipschitz gradients. The bound depends on weighted norms of the data and on the integral of $(g_1g_2)^{p_0}$ in gradient space.

\smallskip
The ABP estimates give $L^{\infty}$ bound for sub-solutions in terms of their boundary values and an integral of the source term. Its classical form for linear elliptic equations in non-divergence form was developed in \cite{Alek, Ln_drift, Bakelman, Pucci}. Moreover, it is an essential tool in the Krylov--Safonov theory and in the regularity theory of fully nonlinear elliptic equations; see \cite{CC-1, CIL, GT, Krylov-book, KS, KS1}. Extensions involving unbounded drift and weaker integrability assumptions have been studied in \cite{Cho, DK, Krylov-0, Nazarov}. Gradient dependence and degeneracy lead to further forms of the estimate; see  \cite{ARV,ABP_divform,Imbert,HP}, for example. 

\smallskip
The classical way to prove the ABP estimate for solutions to linear and quasilinear uniformly elliptic equations in non-divergence form can be described as two steps. The first part is by a geometric intrinsic inclusion observation for general continuous functions. Roughly speaking, for a function $u$ defined in a bounded domain $\Omega$, one has 
\[
B_r(0)\subseteq Du\bigl(\Gamma(u)\bigr) \quad \quad \text{for all } \quad
0<r<\frac{\sup\nolimits_{\Omega}u^+-\sup\nolimits_{\partial\Omega}u^+}{\textup{diam}(\Omega)},
\]
where $Du\bigl(\Gamma(u)\bigr)$ denotes the image of the gradient map of $u$ on the upper contact set $\Gamma(u)$. See Subsection~\ref{contact section} below or \cite[Section~9.1]{GT} for precise details. Next, the second part is to apply the area formula to the gradient map of $u$. In this step, the equation of the solution is involved to control the Jacobian determinant $|\det D^2u|$. Connecting these two will give an estimate for $\sup_\Omega u^+$ in terms of its boundary values and an integral of the source term.


\smallskip
In this paper, we retain the geometric inclusion described above and adapt the second step to fully nonlinear equations with spatial weights and nonlinear lower-order terms. We formulate a weighted contact-set argument through a determinant lower bound for a fully nonlinear operator and combine it with spatially weighted data and nonlinear lower-order terms. The structural inequality for $F$ controls the determinant of the negative Hessian on the upper contact set, while the weight associated with $G$ controls its growth in the gradient. The structural conditions are required only when the gradient is larger than a prescribed threshold. Applying the area formula to the gradient map then gives an integral inequality, which we invert to obtain the required bound; the excluded small gradients contribute an explicit term to the estimate. This formulation allows general rates of gradient growth, provided the stated structural assumptions hold. When the total integral of the gradient weight is finite, the data must satisfy the smallness condition in Theorems \ref{ABP thm} and \ref{G-ABP-thm}. When that integral is infinite, this condition is automatic. Thus the admissible size of the data depends on both the determinant bound for $F$ and the growth of $G$.

\smallskip
The viscosity result is proved under the additional assumption $u\in C^{1,1}_{\mathrm{loc}}(\Omega)$. This regularity is assumed, rather than obtained from the equation. It permits the use of the area formula for the gradient and of upper test functions at almost every point of the contact set. The estimates in \cite{Imbert, ABP_divform, HP} allow nonsmooth viscosity solutions under their respective structural assumptions. Our result concerns a different combination of coefficient and gradient dependence, with the additional solution regularity just described. Moreover, the continuity assumed for $F$ in the viscosity formulation includes zero gradients; the gradient threshold in the structural condition does not remove this continuity requirement. 

\smallskip
The applications illustrate how the choice of gradient weights reflects the structure of the equation. For Hessian equations, the Newton--Maclaurin inequality supplies the determinant bound needed in the contact-set argument. For Hessian quotients with a nonconstant denominator, we impose an additional bound on the ratio of the largest and smallest eigenvalues of $-D^2u$ at contact points where this matrix is positive definite. This condition is used to obtain a determinant lower bound for the quotient. For mean-curvature-type equations, we use a symmetric anisotropic coefficient matrix and compute the relevant integrals in gradient space to obtain explicit bounds under a smallness condition. We also treat a normalized quadratic term in the gradient. Finally, for generalized Monge-Amp\`ere equations, the leading coefficient may depend on the point, the solution, and its gradient, while the lower-order term has growth of order at most the dimension. In this case, the associated integral in gradient space is infinite, and the resulting bound does not require smallness of the data. These applications include recoveries of familiar estimates and bounds with additional coefficient and gradient dependence. The precise assumptions and estimates are given in Section \ref{mean-curve}.

\smallskip
To state the main results, let us introduce some definitions and assumptions. For each $p \in [1, \infty)$, the weighted Lebesgue space $L^p(\Omega, \mu)$ is the space consisting of all measurable functions $f: \Omega \rightarrow \mathbb{R}$ such that
\[
\|f\|_{L^p(\Omega, \mu)} = \left( \int_{\Omega} |f(x)|^p \mu(x) dx\right)^{1/p} < \infty.
\]
Similarly, $W^{k,p}(\Omega)$ denotes the standard Sobolev space for $k = 1, 2$. In addition, for each $r>0$ and $x_0 \in \mathbb{R}^n$, $B_r(x_0)$ denotes the ball of radius $r$ centered at $x_0$. We set $B_0(x_0)=\emptyset$. We now state the assumptions on the functions $F$ and $G$.

\begin{assumption} \label{F-assumption} Let $F:  \Omega \times \mathbb{R} \times \mathbb{R}^n \times S(n)\to \mathbb R$ and $G:  \Omega \times \mathbb{R} \times \mathbb{R}^n\to\mathbb R$ be measurable functions.  We assume that there exist a locally integrable function $\mu: \mathbb{R}^n \rightarrow (0, \infty)$, two bounded and measurable functions $g_k: \mathbb{R}^n \rightarrow (0, \infty)$  with $k =1,2$, and two numbers $p_0 \in [1, \infty)$ and $\gamma_0 \geq 0$ such that the following two assumptions hold.
\begin{enumerate}[label=\normalfont(\roman*)]
\item \label{F-assumption-2} For every $\phi \in C^{2}_{\text{loc}}(\Omega)$ and for $x \in \Omega$ so that $D^2\phi(x)$ is non-positive definite  and $|D \phi(x)| \geq \gamma_0$, we have
 \begin{align*}
 & g_1(D \phi(x)) (\textsf{det}(-D^2\phi(x)))^{\frac{1}{p_0}}   \leq  \mu(x)^{\frac{1}{p_0}} F(x, \phi(x), D\phi(x), D^2 \phi(x)),
 \end{align*}
 where $ \textsf{det}(\cdot)$ denotes the determinant operator acting on $n\times n$ matrices. 
\item \label{F-assumption-3} There exists a non-negative function $b \in L^{p_0}(\Omega, \mu)$ such that
  \[
  g_2(\xi) |G(x, z, \xi)| \leq b(x),  \quad \forall \ (x, z, \xi)\in \Omega \times \R \times (\mathbb{R}^n \setminus B_{\gamma_0}(0)).
  \]
\end{enumerate}
\end{assumption}
\noindent
Condition \ref{F-assumption-2} could equivalently be stated for every $(x,z,\xi,X)$ with $X\leq0$ and $|\xi|\geq\gamma_0$. Indeed, any such value, gradient, and Hessian can be realized at $x$ by a quadratic polynomial. We use this equivalent formulation when evaluating the condition at almost every contact point of a viscosity sub-solution.

\smallskip
In Assumption \ref{F-assumption}-\ref{F-assumption-2}, the function $g_1(\xi)$ measures a lower bound for the determinant coercivity of $F(x,z,\xi,X)$ when $X\leq0$. Similarly, from the Assumption \ref{F-assumption}-\ref{F-assumption-3}, $\frac{1}{g_2(\xi)}$ is the growth in $\xi \in \mathbb{R}^n \setminus B_{\gamma_0}(0)$ of the operator $G(x, z, \xi)$. For example, let us consider the following class of Hamilton--Jacobi equations
\begin{equation} \label{model-exm}
-a_{ij}(x, u(x)) D_{ij}u(x)+ G(x, u(x), Du(x)) = f(x) \quad \text{in} \quad \Omega
\end{equation}
in which $(a_{ij})_{i,j=1}^n : \Omega \times \mathbb{R} \rightarrow \R^{n\times n}$ is a non-negative symmetric matrix of continuous functions, $D_{ij} u(x)$ denotes the second partial derivative with respect to $x_i$ and $x_j$ variables of the unknown solution $u$, and the Einstein summation convention is used. We assume that there is a locally integrable function $\mu: \mathbb R^n \rightarrow (0, \infty)$ such that
\[
 1 \leq \mu(x) \textsf{det}(a_{ij}(x, z)), \quad \forall \ x \in \Omega,\, \forall \ z \in \mathbb{R}.
\]
Then, it follows that  Assumption \ref{F-assumption}-\ref{F-assumption-2} holds with $p_0 =n, g_1(\xi) =1$ due to the inequality of arithmetic and geometric means. On the other hand, if there is $\alpha \geq 0$ so that the function $G :\Omega \times \mathbb{R} \times \mathbb{R}^n \rightarrow \mathbb{R}$ satisfies
\[
|G(x, z, \xi)| \leq b(x) (|\xi|^{\alpha} +1), \quad \forall (x, z, \xi) \in \Omega \times \mathbb{R} \times (\mathbb{R}^n \setminus B_{\gamma_0}).
\]
Then,  Assumption \ref{F-assumption}-\ref{F-assumption-3} holds with $g_2(\xi) = (|\xi|^\alpha +1)^{-1}$ as long as $b \in L^{n}(\Omega, \mu)$. We emphasize that there is no restriction on $\alpha$. For $\alpha>1$, the corresponding gradient weight is integrable to the power $n$, so the ABP estimate also requires the smallness condition in the theorems below. 

\smallskip
To state our main results, let $g: \mathbb{R}^n \rightarrow (0, \infty)$ be defined by $g(\xi) = g_1(\xi) g_2(\xi)$, where $g_1, g_2$ are defined in \textup{Assumption \ref{F-assumption}}. Then, it follows that the map $r \mapsto  \|g\|_{L^{p_0}(B_r(0))}$ is continuous and strictly increasing on $(0, \infty)$ as $|\{ \xi \in \R^n: g_1(\xi) g_2(\xi) =0\}| =0$. Hence, its inverse exists and is denoted by $H:\left[0,\|g\|_{L^{p_0}(\mathbb R^n)}\right)\to[0,\infty)$. We set $H(0)=0$. In particular, it satisfies
\begin{equation} \label{H-def}
 H(\|g\|_{L^{p_0}(B_r(0))}) =r, \quad \forall\ r \in (0, \infty).
\end{equation}
We emphasize that here, it is possible that
\[
\|g\|_{L^{p_0}(\mathbb{R}^n)} = \infty.
\]
Our first result is on the maximum principle for the classical solutions to  \eqref{main eqn}.
\begin{theorem}\label{ABP thm} Suppose that $\Omega \subset \mathbb{R}^n$ is a non-empty bounded open set in $\R^n$ with boundary $\partial \Omega$, and  \textup{Assumption \ref{F-assumption}} holds. Let $f: \Omega \rightarrow \R$ be a measurable function satisfying $f^+\in L^{p_0}(\Omega, \mu)$ with $p_0 \in [1, \infty)$ the number defined in \textup{Assumption \ref{F-assumption}}, and  let 
\begin{equation}  \label{gamma-def}
\begin{split}
\gamma  & =   2^{\frac{p_0-1}{p_0}} \Big(|B_{\gamma_0}(0)|\cdot\|g_1 g_2\|_{L^\infty(\mathbb{R}^n)}^{p_0} \\
& \qquad \qquad +\|g_2\|^{p_0}_{L^\infty(\mathbb{R}^n)} \|f^+\|^{p_0}_{L^{p_0}(\Omega, \mu)}  + \|b\|^{p_0}_{L^{p_0}(\Omega, \mu)} \Big)^{\frac{1}{p_0}},
\end{split}
\end{equation}
where $\gamma_0\in [0, \infty)$ and the functions $g_1,\, g_2,\, b$, and $\mu$ are defined in \textup{Assumption \ref{F-assumption}}. If $\gamma < \|g_1g_2\|_{L^{p_0}(\mathbb{R}^n)}$, then
\begin{equation}\label{abp}
 \sup_{\Omega} u^+ \le \sup_{\partial\Omega} u^+ + \textup{diam}(\Omega) \max\big\{ \gamma_0, H(\gamma) \big\},
\end{equation}
for every $u \in C^2(\Omega) \cap C(\overline{\Omega})$ satisfying 
\[
F(x,u,Du,D^2u)+G(x,u,Du)\leq f(x)
\] 
pointwise in $\Omega$, where $H$ is defined in \eqref{H-def}.
\end{theorem}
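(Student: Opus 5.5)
We follow the classical contact-set argument of \cite[Section~9.1]{GT}, adapted to the weights $\mu$, $g_1$, $g_2$. Set $M=\sup_\Omega u^+-\sup_{\partial\Omega}u^+$ and $d=\textup{diam}(\Omega)$. We may assume $M>0$ and, arguing by contradiction, that $M/d>\max\{\gamma_0,H(\gamma)\}$. Fix any $r$ with $\max\{\gamma_0,H(\gamma)\}<r<M/d$. By the geometric inclusion recalled in the introduction (Subsection~\ref{contact section}), $B_r(0)\subseteq Du(\Gamma(u))$, where $\Gamma(u)$ is the upper contact set. On $\Gamma(u)$ the function $u$ lies below a supporting hyperplane, so $D^2u(x)\le 0$ there.

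We split the ball as $B_r(0)=B_{\gamma_0}(0)\cup\big(B_r(0)\setminus B_{\gamma_0}(0)\big)$ and compute
\[
\int_{B_r(0)} g(\xi)^{p_0}\,d\xi\le |B_{\gamma_0}(0)|\,\|g\|_{L^\infty}^{p_0}+\int_{Du(\Gamma^+)} g(\xi)^{p_0}\,d\xi,
\]
where $\Gamma^+=\{x\in\Gamma(u):|Du(x)|\ge\gamma_0\}$. Since $u\in C^2$, the area formula gives
\[
\int_{Du(\Gamma^+)} g^{p_0}\,d\xi\le\int_{\Gamma^+} g(Du)^{p_0}\,\textsf{det}(-D^2u)\,dx .
\]

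Next we insert the structure. At a point $x\in\Gamma^+$, Assumption~\ref{F-assumption}-\ref{F-assumption-2} (applied with $\phi=u$) gives
\[
g_1(Du)\,\textsf{det}(-D^2u)^{1/p_0}\le\mu^{1/p_0}F\le\mu^{1/p_0}\big(f-G\big)\le\mu^{1/p_0}\big(f^++|G|\big).
\]
Multiplying by $g_2(Du)$ and using Assumption~\ref{F-assumption}-\ref{F-assumption-3}, we obtain
\[
g(Du)\,\textsf{det}(-D^2u)^{1/p_0}\le\mu^{1/p_0}\big(\|g_2\|_\infty f^++b\big).
\]
Raising this to the power $p_0$ and applying $(a+c)^{p_0}\le 2^{p_0-1}(a^{p_0}+c^{p_0})$, then integrating over $\Gamma^+\subseteq\Omega$, yields
\[
\int_{\Gamma^+} g(Du)^{p_0}\,\textsf{det}(-D^2u)\,dx\le 2^{p_0-1}\Big(\|g_2\|_\infty^{p_0}\|f^+\|^{p_0}_{L^{p_0}(\Omega,\mu)}+\|b\|^{p_0}_{L^{p_0}(\Omega,\mu)}\Big).
\]
Since $2^{p_0-1}\ge1$, the constant $2^{p_0-1}$ may also be placed in front of the $|B_{\gamma_0}|$ term. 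Combining the estimates gives $\|g\|^{p_0}_{L^{p_0}(B_r)}\le\gamma^{p_0}$, that is, $\|g\|_{L^{p_0}(B_r)}\le\gamma$.

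The hypothesis $\gamma<\|g\|_{L^{p_0}(\R^n)}$ ensures that $H(\gamma)$ is defined. Because $r\mapsto\|g\|_{L^{p_0}(B_r)}$ is strictly increasing and $r>H(\gamma)$, we get $\|g\|_{L^{p_0}(B_r)}>\|g\|_{L^{p_0}(B_{H(\gamma)})}=\gamma$, which is a contradiction. Hence $M\le d\max\{\gamma_0,H(\gamma)\}$. This is \eqref{abp}; the case $\gamma=0$ is covered by the convention $H(0)=0$.

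The main care point is the area-formula step on $\Gamma^+$. We need that the image of the set where $|Du|\ge\gamma_0$ covers $B_r\setminus B_{\gamma_0}$; this holds because any contact point with $|Du(x)|\ge\gamma_0$ belongs to $\Gamma^+$. We also need the sign $\textsf{det}(-D^2u)\ge0$ together with $F\le f-G$, so that the right-hand side is nonnegative. This uses $g_1>0$ and the pointwise inequality at contact points. The rest is bookkeeping.
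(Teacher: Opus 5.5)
Your proof is correct and is essentially the paper's argument. The steps match: the gradient image of the upper contact set covers $B_r$; the area formula for $Du$ is split at $|Du|=\gamma_0$; the structural inequalities together with $(a+c)^{p_0}\le 2^{p_0-1}(a^{p_0}+c^{p_0})$ give $\|g\|_{L^{p_0}(B_r)}\le\gamma$; and strict monotonicity of $r\mapsto\|g\|_{L^{p_0}(B_r)}$ is then inverted through $H$.

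The differences are cosmetic. You use the full contact set $\Gamma(u)$ instead of $\Gamma_r(u)$. This is harmless, because the area formula for the locally Lipschitz map $Du$ holds on any measurable subset of $\Omega$, and your pointwise bound holds everywhere there. You also argue by contradiction rather than letting $r\uparrow M(u)$.

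One technical point you should add, as the paper does: $g_2(Du)\le\|g_2\|_{L^\infty}$ can fail where $Du$ takes values in a null set of slopes. By the area formula, that set carries zero $\det(-D^2u)\,dx$-measure, so it does not affect the estimate.
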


\smallskip
In this paper, we also prove that Theorem \ref{ABP thm} holds for viscosity sub-solutions with locally Lipschitz gradients. To this end, let us recall the definition of viscosity solutions. We refer to \cite{CC-1, CIL, Kat, Krylov-book, Silvestre} for more details on viscosity solutions.
\begin{definition}\label{vis soln def} Suppose that $F \in C(\Omega\times \R \times \mathbb{R}^n  \times S(n))$, $G \in C(\Omega\times \R \times \mathbb{R}^n)$ and $f \in C(\Omega)$. A function $u \in C(\Omega)$ is said to be a viscosity sub-solution of \eqref{main eqn} in $\Omega$ if for all $\varphi\in C^2(\Omega)$, and $x\in \Omega$ at which $u-\varphi$ has a local maximum, one has
\[
F(x, u(x), D \varphi(x), D^2\varphi(x)) + G(x, u(x), D\varphi(x))\leq f(x).
\] 
Similarly, a function $u \in C(\Omega)$ is called a viscosity super-solution of \eqref{main eqn} in $\Omega$  if for all $\varphi\in C^2(\Omega)$, and $x\in \Omega$ at which $u-\varphi$ has a local minimum, one has 
\[
F(x, u(x), D \varphi(x), D^2\varphi(x)) + G(x, u(x), D\varphi(x))\geq f(x).
\] 
Finally, $u \in C(\Omega)$ is called a viscosity solution of \eqref{main eqn} in $\Omega$ if it is both a viscosity sub-solution and a viscosity super-solution. 
\end{definition}
The following result on the ABP-estimates for viscosity solutions is another main result of the paper.
\begin{theorem}\label{G-ABP-thm} Suppose that $\Omega \subset \mathbb{R}^n$ is a non-empty open bounded set in $\mathbb{R}^n$ with boundary $\partial \Omega$, and suppose that $F \in C(\Omega\times \R \times \mathbb{R}^n  \times S(n))$, $G \in C(\Omega\times \R \times \mathbb{R}^n)$ and \textup{Assumption \ref{F-assumption}} holds. Let $f \in C(\Omega)$ such that $f^+ \in L^{p_0}(\Omega, \mu),$ and let $\gamma\ge0$ be defined in \eqref{gamma-def}. If $\gamma < \|g_1g_2\|_{L^{p_0}(\mathbb{R}^n)}$, then 
\begin{align}\label{g-abp}
 \sup_{\Omega} u^+ \le
\sup_{\partial\Omega} u^+  +  \textup{diam}(\Omega)\max \big\{ \gamma_0, H(\gamma) \big\},
\end{align}
for every viscosity sub-solution $u \in C(\overline{\Omega})\cap C^{1,1}_{\mathrm{loc}}(\Omega)$ to equation \eqref{main eqn}, where $H$ is defined in \eqref{H-def}, and the functions $\mu, g_1, g_2$ and the numbers $\gamma_0,  p_0$ are defined in \textup{Assumption \ref{F-assumption}}.
\end{theorem}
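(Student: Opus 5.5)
The plan is to run the weighted contact-set argument of Theorem \ref{ABP thm}, with pointwise second derivatives replaced by Alexandrov-type information available almost everywhere for $C^{1,1}_{\mathrm{loc}}$ functions. Set $M=\sup_\Omega u^+-\sup_{\partial\Omega}u^+$, $d=\textup{diam}(\Omega)$, and suppose for contradiction that $M/d>\max\{\gamma_0,H(\gamma)\}$. Let $\Gamma^+$ be the upper contact set of $u$. The geometric inclusion of Subsection~\ref{contact section} gives $B_r(0)\subseteq Du(\Gamma^+)$ for every $r<M/d$. It needs only continuity of $u$ and, at contact points, differentiability, which we have since $u\in C^1$. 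Fix $r$ with $\max\{\gamma_0,H(\gamma)\}<r<M/d$, and set $A=B_r(0)\setminus \overline{B_{\gamma_0}(0)}$ and $E=\Gamma^+\cap (Du)^{-1}(A)$.

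\textbf{Pointwise inequality on $E$.} Since $Du$ is locally Lipschitz, Rademacher's theorem shows that $D^2u(x)$ exists for a.e.\ $x$. At such a point $u$ has a second-order Taylor expansion with $o(|y-x|^2)$ error; for $C^{1,1}$ functions this follows by integrating $Du$ along segments at a point of differentiability of $Du$. At a contact point $x\in E$ where $D^2u(x)$ exists, $u$ lies below its supporting hyperplane, so $X:=D^2u(x)\le 0$. For $\varepsilon>0$ the test function
\[
\varphi_\varepsilon(y)=u(x)+Du(x)\cdot(y-x)+\tfrac12 (X+\varepsilon I)(y-x)\cdot(y-x)
\]
touches $u$ from above at $x$. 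The viscosity inequality gives $F(x,u(x),Du(x),X+\varepsilon I)+G(x,u(x),Du(x))\le f(x)$, and letting $\varepsilon\to0$ with $F$ continuous yields $F(x,u(x),Du(x),X)+G(x,u(x),Du(x))\le f(x)$. Because $|Du(x)|\ge\gamma_0$, $X\le0$, and Assumption \ref{F-assumption}-\ref{F-assumption-2} holds in its equivalent pointwise form (via a quadratic polynomial), we get
\[
g_1(Du)\,(\det(-D^2u))^{1/p_0}\le \mu^{1/p_0}\big(f^++|G|\big).
\]
Multiplying by $g_2(Du)$ and using \ref{F-assumption-3} gives
\[
g(Du)^{p_0}\det(-D^2u)\le 2^{p_0-1}\mu\big(\|g_2\|_\infty^{p_0}(f^+)^{p_0}+b^{p_0}\big)\quad\text{a.e. on }E.
\]

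\textbf{Area formula and conclusion.} Since $Du$ is Lipschitz on compact subsets, the area formula applies to the Lipschitz map $Du$; writing $E$ as a countable union of compact pieces handles the local Lipschitz bound. It yields
\[
\int_{A} g^{p_0}\,d\xi\le \int_{E} g(Du)^{p_0}|\det D^2u|\,dx.
\]
Combining this with the pointwise inequality, and adding $\int_{B_{\gamma_0}}g^{p_0}\le |B_{\gamma_0}|\|g\|_\infty^{p_0}$, we obtain $\|g\|^{p_0}_{L^{p_0}(B_r)}\le \gamma^{p_0}$. The factor $2^{p_0-1}\ge1$ absorbs the $B_{\gamma_0}$ term into $\gamma^{p_0}$. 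Hence $r\le H(\gamma)$; this uses $\gamma<\|g\|_{L^{p_0}(\R^n)}$ so that $H(\gamma)$ is defined, together with the monotonicity of $r\mapsto\|g\|_{L^{p_0}(B_r)}$. This contradicts the choice of $r$ and proves \eqref{g-abp}.

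\textbf{Main obstacle.} I expect the delicate step to be justifying the viscosity inequality at a.e.\ contact point. One needs a genuine second-order expansion of a $C^{1,1}$ function at points where $Du$ is differentiable, and the $+\varepsilon I$ perturbation is needed because the expansion only gives touching up to $o(|y-x|^2)$; continuity of $F$ in $X$ then removes $\varepsilon$. A second subtlety is that the area formula needs only the inequality on a full-measure subset of $E$. This is harmless, because Lipschitz maps send null sets to null sets, so the exceptional set contributes nothing to the image integral.
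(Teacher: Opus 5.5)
Your proposal is correct and follows essentially the same route as the paper's proof. The gradient image of the contact set covers the slope ball; the $C^{1,1}_{\mathrm{loc}}$ second-order expansion with an $\varepsilon I$ perturbation yields the viscosity inequality at a.e.\ contact point; Assumption~\ref{F-assumption} bounds $g(Du)^{p_0}\det(-D^2u)$; and the area formula for the locally Lipschitz map $Du$, plus the term $|B_{\gamma_0}|\|g\|_\infty^{p_0}$, gives $\|g\|_{L^{p_0}(B_r)}\le\gamma$. The differences are cosmetic: you argue by contradiction with an annulus of slopes instead of letting $r\uparrow M$ with the split $C_r^1\cup C_r^2$. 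The one point you gloss over, handled in Lemma~\ref{area lemma}, is that for merely Lebesgue measurable weights the composition $g(Du)$ and the bound $g_2(Du)\le\|g_2\|_\infty$ need the fact that $Du$ pulls back null sets of slopes to null sets within $\{\det D^2u\neq0\}$.
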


\noindent
For comparison with \cite[Theorem 10.5]{GT}, consider $p_0=n$, $\gamma_0=0$, and $F(x,z,\xi,X)=-\operatorname{Tr}(A(x,z,\xi)X)$ with $A$ positive definite. The trace-determinant inequality shows that the first structural condition is equivalent to
\[
g_1(\xi)\leq n\mu(x)^{\frac{1}{n}}\det(A(x,z,\xi))^{\frac{1}{n}}.
\]
Taking the gradient profile $g_1g_2$ and the spatial datum $\mu^{\frac{1}{n}}(\|g_2\|_\infty f^++b)$ in the classical weighted argument gives the corresponding quasilinear bound. Thus the inverse-integral form of the estimate agrees with that formulation. Here the determinant condition is imposed directly on $F$, without requiring linear dependence on the Hessian. We also note that the condition $\gamma < \|g_1g_2\|_{L^{p_0}(\mathbb{R}^n)}$ in Theorem \ref{ABP thm} and Theorem \ref{G-ABP-thm} is trivial  when
\[ \|g_1 g_2\|_{L^{p_0}(\mathbb{R}^{n})} = \infty.
\]

\smallskip
The paper is organized as follows. In Section 2, we recall the properties of upper contact sets and prove Theorem \ref{ABP thm}. In Section 3, we discuss concave envelopes and prove Theorem \ref{G-ABP-thm}. Section \ref{mean-curve} contains the applications to Hessian equations, Hessian quotients, mean-curvature-type equations, and generalized Monge-Amp\`ere equations. The mean curvature estimates are related to the estimates for divergence form operators in \cite{ABP_divform}.

\section{Classical Solutions and Proof of Theorem \ref{ABP thm}}
\subsection{Contact Sets and Their Properties}\label{contact section} In this subsection, we recall some definitions and properties of contact sets of continuous functions that are needed in the proof of Theorem \ref{ABP thm}. To begin with, let $\Omega \subset \mathbb{R}^n$ be an open non-empty set. For a given $u \in C(\Omega)$, we denote the upper contact set of $u$ in $\Omega$ by
\[
\Gamma(u)=\left\{x\in\Omega : 
\begin{array}{l}
\text{there exists } \xi=\xi(x)\in\mathbb{R}^n \text{ such that} \\
u(y)\le u(x)+\langle \xi,y-x\rangle \quad \text{for all } y\in\Omega
\end{array}
\right\}.
\]
The following lemma characterizing the upper contact set is well-known.
\begin{lemma}\label{UCS D2u} If $u\in C^2(\Omega)$, then $D^2 u(x_0)$ is non-positive definite for every $x_0\in\Gamma(u)$.
\end{lemma}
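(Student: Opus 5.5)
The plan is to use the supporting hyperplane at $x_0$ together with a second-order Taylor expansion of $u$ around $x_0$. Fix $x_0 \in \Gamma(u)$ and let $\xi = \xi(x_0)$ be the vector from the definition of the upper contact set, so that
\[
u(y) \le u(x_0) + \langle \xi, y - x_0\rangle \quad \text{for all } y \in \Omega .
\]

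\emph{Step 1: identify $\xi$.} The function $w(y) = u(y) - u(x_0) - \langle \xi, y-x_0\rangle$ is $C^2$ on $\Omega$, satisfies $w \le 0$, and $w(x_0)=0$. Hence $w$ attains an interior maximum at $x_0$. Since $\Omega$ is open, the first-order condition gives $Dw(x_0)=0$, that is, $\xi = Du(x_0)$.

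\emph{Step 2: second-order condition.} Fix a direction $e \in \mathbb{R}^n$ and choose $\delta>0$ with $B_\delta(x_0)\subset\Omega$. For $|t|$ small, Taylor's theorem gives
\[
w(x_0+te) = \tfrac{t^2}{2}\langle D^2u(x_0)e,e\rangle + o(t^2).
\]
Here we used $w(x_0)=0$, $Dw(x_0)=0$, $D^2w = D^2u$, and the continuity of $D^2u$. Because $w \le 0$, dividing by $t^2$ and letting $t\to0$ yields $\langle D^2u(x_0)e,e\rangle \le 0$. As $e$ is arbitrary, $D^2u(x_0)$ is non-positive definite.

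There is no real obstacle in this argument. The only points needing care are that $x_0$ is an interior point, which holds because $\Omega$ is open, so that the first- and second-order necessary conditions for a maximum apply.
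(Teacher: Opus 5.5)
Your proposal is correct and takes essentially the same route as the paper: both restrict the supporting-hyperplane inequality to a line $t\mapsto x_0+te$ and apply the second-order necessary condition at the interior maximum $t=0$. The only difference is that you spell out $\xi=Du(x_0)$ and the Taylor expansion, which the paper leaves implicit in the one-variable second-derivative test.
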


\begin{proof}
Let $x_0\in\Gamma(u)$ and choose a supporting slope $\xi$. Then
\begin{equation}\label{lemma_UCS_def}
u(y)-u(x_0)-\langle\xi,y-x_0\rangle\leq0,\qquad y\in\Omega.
\end{equation}
For every $\eta\in\mathbb R^n$, the function $t\mapsto u(x_0+t\eta)-u(x_0)-t\langle\xi,\eta\rangle$ has a local maximum at zero. Its second derivative there is $\langle D^2u(x_0)\eta,\eta\rangle\leq0$. This proves the assertion.
\end{proof}

Next, for a given function $u \in C(\Omega)$ and  $r>0$, we denote
\begin{equation}\label{upper_contact_r}
\Gamma_r(u)=\left\{x \in \Omega :
\begin{array}{l}
\text{there exists } \xi=\xi(x)\in B_r(0) \text{ such that} \\
u(y) \le u(x) + \langle \xi, y-x \rangle\quad \text{for all } y \in \Omega
\end{array}
\right\}.
\end{equation}
Clearly, $\Gamma_r(u)\subseteq \Gamma(u)$ for all $r>0$ and $u \in C(\Omega)$. We also denote
\begin{equation}\label{def-M}
M(u)=\frac{\displaystyle{\sup_{x\in\Omega}u^+(x)-\sup_{x\in\partial\Omega}u^+(x)}}{\textup{diam}(\Omega)},
\end{equation}
where $u^+(x) = \max\{u(x), 0\}$ for $x \in \Omega$. We now prove $\Gamma_r (u)$ is contained in a compact subset of $\Omega$.

\begin{lemma} \label{Lemma-UCS_compact} 
Suppose that $\Omega \subset \R^n$ is a non-empty open bounded set and $u \in C(\overline{\Omega})$ satisfies $M(u)>0$. Then, for every  $r \in (0, M(u))$, there is $\delta = \delta(r, u)>0$ such that 
\begin{equation*}
\Gamma_r(u)\subseteq \Omega_\delta \quad \text{for} \quad \Omega_\delta = \{x\in\Omega: \textup{dist}(x,\partial\Omega)\ge \delta\}.
\end{equation*}
\end{lemma}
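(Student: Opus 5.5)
The plan is to argue by contradiction, using compactness of $\overline{\Omega}$ and continuity of $u$ on $\overline{\Omega}$. Fix $r\in(0,M(u))$. Write $S=\sup_{\Omega}u^+$ and $s=\sup_{\partial\Omega}u^+$. Since $M(u)>0$, we have $S>s\ge 0$. Moreover $S=\sup_\Omega u=\max_{\overline\Omega}u$ is attained at some $x^*\in\Omega$, because $u\in C(\overline\Omega)$ and $S>s$.

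The key estimate concerns a point $x\in\Gamma_r(u)$ with supporting slope $\xi\in B_r(0)$. Taking $y=x^*$ in the contact inequality, we get
\[
S=u(x^*)\le u(x)+|\xi|\,|x^*-x| < u(x)+r\,\textup{diam}(\Omega)
\]
(when $x^*=x$ this is trivial). Hence
\[
u(x)> S-r\,\textup{diam}(\Omega)=s+(M(u)-r)\,\textup{diam}(\Omega).
\]
Setting $\varepsilon=(M(u)-r)\,\textup{diam}(\Omega)>0$, every point of $\Gamma_r(u)$ satisfies $u(x)>s+\varepsilon$. Note that $\textup{diam}(\Omega)>0$ because $\Omega$ is a nonempty open set.

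Next I would choose $\delta$ by uniform continuity. Since $u$ is uniformly continuous on the compact set $\overline\Omega$, there is $\delta>0$ such that $|u(x)-u(z)|<\varepsilon$ whenever $x,z\in\overline\Omega$ and $|x-z|<\delta$. Now suppose $x\in\Gamma_r(u)$ had $\textup{dist}(x,\partial\Omega)<\delta$. Because $\partial\Omega$ is compact and nonempty (as $\Omega$ is bounded, nonempty and open), there would be $z\in\partial\Omega$ with $|x-z|<\delta$. Then $u(x)<u(z)+\varepsilon\le s+\varepsilon$, contradicting the key estimate. Therefore $\Gamma_r(u)\subseteq\Omega_\delta$, and $\delta$ depends only on $r$ and $u$.

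There is no serious obstacle here. The only delicate point is the strict inequality coming from $|\xi|<r$, together with the use of the maximizer $x^*$. If one prefers to avoid attainment of the maximum, one can instead take $y$ with $u(y)$ close to $S$ and absorb the error into $\varepsilon/2$.
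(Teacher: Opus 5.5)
Your proof is correct and is essentially the paper's argument. You compare with an interior maximizer of $u$ to get $u(x)\ge \sup_{\partial\Omega}u^+ + (M(u)-r)\operatorname{diam}(\Omega)$ on $\Gamma_r(u)$, and then use uniform continuity of $u$ on $\overline\Omega$ together with $u\le\sup_{\partial\Omega}u^+$ on $\partial\Omega$ to keep contact points at distance at least $\delta$ from the boundary. The strict inequality you flag as delicate is not actually needed, because the uniform-continuity step already supplies strictness; this is why the paper works with $\ge$.
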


\begin{proof}
Let $R_0=\operatorname{diam}(\Omega)$ and $c=\sup_{\partial\Omega}u^+$. Since $M(u)>0$, there is $z\in\Omega$ such that $u(z)=\sup_\Omega u^+=c+R_0M(u)$. If $x\in\Gamma_r(u)$ has supporting slope $\xi\in B_r$, then
\[
u(z)\leq u(x)+\langle\xi,z-x\rangle\leq u(x)+rR_0.
\]
Consequently,
\begin{equation}\label{x-zero-y}
u(x)\geq c+R_0(M(u)-r).
\end{equation}
By uniform continuity on $\overline\Omega$, there is $\delta>0$ such that $|u(x)-u(y)|<R_0(M(u)-r)$ whenever $x,y\in\overline\Omega$ and $|x-y|<\delta$. Since $u\leq c$ on $\partial\Omega$, \eqref{x-zero-y} implies $\operatorname{dist}(x,\partial\Omega)\geq\delta$. The set $\Omega_\delta$ is compact and contained in $\Omega$.
\end{proof}

From the definition of $M(u)$ in \eqref{def-M}, we prove another essential lemma for our estimates.
\begin{lemma}\label{UCS lemma}
For $u\in C(\overline{\Omega})\cap C^1(\Omega)$, if $M = M(u)>0$, then for $0<r<M(u)$,
\begin{equation*}
B_r(0)= D u(\Gamma_r(u)).
\end{equation*}
\end{lemma}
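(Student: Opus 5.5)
We prove the two inclusions separately. For the inclusion $Du(\Gamma_r(u))\subseteq B_r(0)$, let $x\in\Gamma_r(u)$ with supporting slope $\xi\in B_r(0)$. Then $y\mapsto u(y)-u(x)-\langle\xi,y-x\rangle$ attains its maximum over $\Omega$ at the interior point $x$. Since $u\in C^1(\Omega)$, the gradient of this function vanishes at $x$, so $\xi=Du(x)$. Hence $Du(x)\in B_r(0)$. The same observation shows that the supporting slope at a contact point is unique, which is needed below.

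For the reverse inclusion, fix $\xi\in B_r(0)$ and consider the affine functions $\ell_c(y)=c+\langle\xi,y\rangle$. We take
\[
c^*=\sup_{y\in\overline\Omega}\big(u(y)-\langle\xi,y\rangle\big),
\]
which is finite and attained at some $x\in\overline\Omega$ by continuity of $u$ on the compact set $\overline\Omega$. Then $u\le\ell_{c^*}$ on $\overline\Omega$, with equality at $x$, so
\[
u(y)\le u(x)+\langle\xi,y-x\rangle \quad\text{for all } y\in\Omega.
\]
Once we know $x\in\Omega$, this gives $x\in\Gamma_r(u)$, and the first step gives $Du(x)=\xi$.

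The main step is therefore to rule out $x\in\partial\Omega$; this is where $r<M(u)$ enters. Choose $z\in\overline\Omega$ with $u(z)=\sup_\Omega u^+$. Since $M>0$, we have $u(z)>\sup_{\partial\Omega}u^+\ge0$. Suppose $x\in\partial\Omega$. Then the contact inequality gives
\[
u(z)\le u(x)+|\xi|\,|z-x|\le \sup_{\partial\Omega}u^+ + |\xi|\,\mathrm{diam}(\Omega).
\]
This yields $M(u)\le|\xi|<r<M(u)$, a contradiction, so $x\in\Omega$.

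A minor point needs care: $\sup_\Omega u^+$ should be attained in $\overline\Omega$, which holds because $u\in C(\overline\Omega)$. We also need $|z-x|\le\mathrm{diam}(\Omega)$, which holds since both points lie in $\overline\Omega$. The contradiction uses the strict inequality $|\xi|<r$, so we need no strictness elsewhere; alternatively we can argue exactly as in the proof of Lemma~\ref{Lemma-UCS_compact}.
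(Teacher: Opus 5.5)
Your proof is correct and follows essentially the same route as the paper. You maximize $u(y)-\langle\xi,y\rangle$ over $\overline\Omega$, rule out a boundary maximizer by comparing with a point where $u$ attains $\sup_\Omega u^+$ (using $|\xi|<r<M(u)$), and use $u\in C^1(\Omega)$ to identify every supporting slope at an interior contact point with $Du$; taking the comparison point in $\overline\Omega$ and bounding by $|\xi|$ instead of $r$ is only a cosmetic difference.
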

\begin{proof}
Let us denote $R_0 = \operatorname{diam}(\Omega)$ and $c=\sup_{\partial\Omega}u^+$. We choose $x_0\in\Omega$ with $u(x_0)=c+R_0M(u)$. Fix $\xi\in B_r$ and let $z\in\overline\Omega$ maximize $u(y)-\langle\xi,y\rangle$, 
\begin{equation}\label{d-def}
h=u(z)-\langle\xi,z\rangle,\quad \text{and}\quad I_\xi(y)=h+\langle\xi,y\rangle.
\end{equation}
Then
\begin{equation}\label{u-I-map}
u(y)\leq I_\xi(y)\quad\text{in }\overline\Omega,\qquad u(z)=I_\xi(z).
\end{equation}
If $z\in\partial\Omega$, comparison with $x_0$ gives
\[
c+R_0M(u)=u(x_0)\leq u(z)+\langle\xi,x_0-z\rangle\leq c+rR_0,
\]
contrary to $r<M(u)$. Therefore $z\in\Omega$, $z\in\Gamma_r(u)$, and $Du(z)=\xi$. This proves $B_r\subset Du(\Gamma_r(u))$. Conversely, as $u \in C^1$, every supporting slope at a point equals the gradient of $u$ at that point. Hence, $Du(\Gamma_r(u))\subset B_r$ and therefore $B_r = Du(\Gamma_r(u))$. \end{proof}
\subsection{ABP Estimates for Classical Solutions} We give the proof of Theorem~\ref{ABP thm}.
\begin{proof}[Proof of Theorem \ref{ABP thm}]
If $M(u)\leq\gamma_0$, there is nothing to prove. Suppose $M(u)>\gamma_0$ and fix $r\in(\gamma_0,M(u))$. From Lemmas \ref{UCS D2u}-\ref{UCS lemma}, $\Gamma_r(u)$ is contained in a compact subset of $\Omega$, and $D^2u\leq0$ on $\Gamma_r(u)$, and $B_r=Du(\Gamma_r(u))$. Define
\[
E_r=\{x\in\Gamma_r(u):|Du(x)|\geq\gamma_0\},
\]
and recall that $g=g_1g_2$. The contact set $\Gamma(u)$ is relatively closed in $\Omega$, since for $u\in C^1(\Omega)$ it is given by the inequalities
\[
u(y)\leq u(x)+\langle Du(x),y-x\rangle,\qquad y\in\Omega.
\]
Also $\Gamma_r(u)=\Gamma(u)\cap\{|Du|<r\}$, so $E_r$ is measurable. The gradient is locally Lipschitz. Applying the weighted area formula, with multiplicities and the convention for measurable weights explained in Lemma \ref{area lemma}, yields
\begin{equation}\label{inte-1}
\int_{B_r}g(\xi)^{p_0}\,d\xi
\leq |B_{\gamma_0}|\|g\|_\infty^{p_0}
+\int_{E_r}g(Du)^{p_0}\det(-D^2u)\,dx.
\end{equation}
Here, we used $g_2(Du)\leq\|g_2\|_\infty$.  Moreover, we may discard the points of $E_r$ where $\det(-D^2u)=0$: the area formula applied to the preimage of any null set of slopes gives zero integral of $|\det D^2u|$ there. By the {\bf Assumption} \ref{F-assumption}, the equation, and $0\leq F\leq f^++|G|$, we have
\[
\begin{split}
g(Du)^{p_0}\det(-D^2u)
&\leq\mu g_2(Du)^{p_0}(f^++|G|)^{p_0}\\
&\leq2^{p_0-1}\mu\left(\|g_2\|_\infty^{p_0}(f^+)^{p_0}+b^{p_0}\right)
\end{split}
\]
almost everywhere on $E_r$. Then, it follows from this and \eqref{inte-1} that
\[
\int_{B_r}g^{p_0}\leq |B_{\gamma_0}|\|g\|_\infty^{p_0}
+2^{p_0-1}\left(\|g_2\|_\infty^{p_0}\|f^+\|_{L^{p_0}(\Omega,\mu)}^{p_0}
+\|b\|_{L^{p_0}(\Omega,\mu)}^{p_0}\right).
\]
From \eqref{gamma-def}, we note that $\|g\|_{L^{p_0}(B_r)}\leq\gamma$. Since $H$ is increasing, it follows that
\begin{equation}\label{r-H}
r\leq H(\gamma),\qquad \gamma_0<r<M(u).
\end{equation}
Letting $r\uparrow M(u)$, we obtain the required bound. 
\end{proof}

\section{Viscosity Solutions and Proof of Theorem \ref{G-ABP-thm}}

We first recall the concave-envelope construction used in ABP estimates, see \cite{Caffarelli,CC-1}. We then prove Theorem \ref{G-ABP-thm} directly on the upper contact set, using the locally Lipschitz gradient of the sub-solution. \subsection{Concave Envelopes} We recall the concept of concave envelopes and prove a few important properties.
\begin{definition}
\label{truncated envelope}
Let $u\in C(\overline{\Omega})$ and $r>0$. Define
\[
\mathcal{L}_r(u)
=\left\{ l(x)=a+\langle \xi,x\rangle: \ a \in \mathbb{R}, \ \xi \in B_r, \ 
l\geq u\ \text{in }\Omega
\right\}.
\]
The infimum below defines a finite concave, $r$-Lipschitz function on $\mathbb R^n$. Its restriction to $\Omega$, called the truncated concave envelope of $u$, is
\[
\Gamma_r(x)
=
\inf_{l\in\mathcal{L}_r(u)}l(x),
\qquad x\in\Omega .
\]
The corresponding contact set is
\[
A_r(u) =
\{x\in\Omega:\Gamma_r(x)=u(x)\}.
\]
\end{definition}

To verify finiteness, fix $y_0\in\Omega$. Every $l\in\mathcal L_r(u)$ satisfies
\[
l(x)\geq u(y_0)-r|x-y_0|,\qquad x\in\mathbb R^n,
\]
while the constant $\max_{\overline\Omega}u$ belongs to $\mathcal L_r(u)$. The infimum is concave because it is an infimum of affine functions. Since all their slopes have norm at most $r$, the infimum is also $r$-Lipschitz.

\smallskip
We note that $\Gamma_r$ depends on $u$ but we suppress this dependence for notational simplicity. Let us also recall the upper differentiability sets of concave functions.
\begin{definition}\label{definition upper differential}
Let $\Gamma:\Omega\rightarrow\mathbb{R}$ be a concave function. 
For $x\in\Omega$, the upper differential (or superdifferential) of $\Gamma$
at $x$ is defined by
\[
D^+\Gamma(x)
=
\left\{
\xi\in\mathbb{R}^n:
\Gamma(y)\leq
\Gamma(x)+\langle \xi,y-x\rangle
\quad\text{for all }y\in\Omega
\right\}.
\]
Equivalently, $\xi\in D^+\Gamma(x)$ if and only if the affine function
\[
y\mapsto \Gamma(x)+\langle \xi,y-x\rangle
\]
is a supporting hyperplane of $\Gamma$ from above at $x$. Similarly, for $r >0$, we also define
\[
D^+\Gamma_r(x)
=
\left\{
\xi\in\mathbb{R}^n:
\Gamma_r(y)\leq
\Gamma_r(x)+\langle\xi,y-x\rangle
\quad\forall y\in\Omega
\right\}.
\]
\end{definition}

Next, we state a few results needed to prove Theorem \ref{G-ABP-thm}. To keep the paper self-contained, we present all details in the proofs of needed lemmas. 

\begin{lemma}
\label{viscosity contact lemma}
Let $u\in C(\overline{\Omega})$ and assume
\[
M(u)=\frac{\sup_{\Omega}u^+-\sup_{\partial\Omega}u^+}{\textup{diam}(\Omega)}>0.
\]
Then, for every $0<r<M(u)$, we have
\begin{equation} \label{set-0718}
B_r(0) \subset \bigcup_{x\in A_r(u)}D^+\Gamma_r(x).
\end{equation}
Moreover, if
\[
E_r=
\left\{
x\in A_r(u):
\Gamma_r \text{ is twice differentiable at }x
\right\},
\]
then
\begin{equation} \label{E_r-0718}
B_r(0)\subset D\Gamma_r(E_r) \cup \big[ \displaystyle{\cup_{x\in (A_r(u)\setminus E_r)}}D^+\Gamma_r(x) \big].
\end{equation}
\end{lemma}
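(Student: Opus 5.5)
To prove \eqref{set-0718}, I will argue exactly as in Lemma \ref{UCS lemma}, with the concave envelope $\Gamma_r$ in place of $u$ and superdifferentials in place of gradients. Fix $\xi\in B_r(0)$, let $R_0=\textup{diam}(\Omega)$ and $c=\sup_{\partial\Omega}u^+$, and choose $x_0\in\Omega$ with $u(x_0)=\sup_\Omega u^+=c+R_0M(u)$, which exists since $M(u)>0$ and $u\in C(\overline\Omega)$. By compactness, choose $z\in\overline\Omega$ maximizing $u(y)-\langle\xi,y\rangle$ over $\overline\Omega$, and set $l(y)=u(z)+\langle\xi,y-z\rangle$. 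If $z\in\partial\Omega$, then
\[
c+R_0M(u)=u(x_0)\le u(z)+|\xi|R_0<c+rR_0,
\]
which contradicts $r<M(u)$. Hence $z\in\Omega$.

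Since $l\ge u$ on $\Omega$ and $|\xi|<r$, we have $l\in\mathcal L_r(u)$, so $\Gamma_r\le l$ on $\Omega$. Every element of $\mathcal L_r(u)$ lies above $u$, so $\Gamma_r\ge u$ on $\Omega$. Evaluating at $z$ gives $u(z)\le\Gamma_r(z)\le l(z)=u(z)$, so $z\in A_r(u)$ and $\Gamma_r(z)=l(z)$. Then for every $y\in\Omega$,
\[
\Gamma_r(y)\le l(y)=\Gamma_r(z)+\langle\xi,y-z\rangle,
\]
that is, $\xi\in D^+\Gamma_r(z)$. This proves \eqref{set-0718}.

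For \eqref{E_r-0718}, split $A_r(u)=E_r\cup(A_r(u)\setminus E_r)$. If $\xi\in D^+\Gamma_r(z)$ with $z\in A_r(u)\setminus E_r$, then $\xi$ already lies in the second union. If instead $z\in E_r$, then $\Gamma_r$ is twice differentiable, and in particular differentiable, at the interior point $z$. A concave function that is differentiable at an interior point has superdifferential equal to $\{D\Gamma_r(z)\}$. The inclusion $D\Gamma_r(z)\in D^+\Gamma_r(z)$ holds because $D^+\Gamma_r(z)$ is nonempty by the previous step. For uniqueness, if $\xi\in D^+\Gamma_r(z)$, take $y=z+t\eta$ with small $t>0$ of both signs: dividing by $t$ and letting $t\to0$ gives $\langle D\Gamma_r(z)-\xi,\eta\rangle=0$ for all $\eta$. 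Hence $\xi=D\Gamma_r(z)\in D\Gamma_r(E_r)$.

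The argument is essentially routine. The step needing the most care is justifying that the global supporting plane $l$ belongs to $\mathcal L_r(u)$ and touches $\Gamma_r$ at $z$. This uses the strict inequality $|\xi|<r$, matching the open ball in the definition of $\mathcal L_r(u)$, together with the sandwich $u\le\Gamma_r\le l$.
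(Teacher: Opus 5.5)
Your proposal is correct and takes essentially the same route as the paper. The maximizer of $u(y)-\langle\xi,y\rangle$ is interior by the boundary comparison from Lemma \ref{UCS lemma}, the resulting supporting plane lies in $\mathcal L_r(u)$, and the sandwich $u\le\Gamma_r\le l$ gives both contact and $\xi\in D^+\Gamma_r(z)$; for \eqref{E_r-0718} you verify directly that the superdifferential at a differentiability point is $\{D\Gamma_r(z)\}$, which is exactly the fact the paper uses (its appeal to Aleksandrov's theorem is not needed for the inclusion itself).
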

\begin{proof} We prove \eqref{set-0718}. Let $\xi\in B_r(0)$ be fixed and we consider the family of affine functions
\[
l_c(x)=\langle\xi,x\rangle+c, \quad c \in \mathbb{R}.
\]
Since $\Omega$ is bounded and $u \in C(\overline{\Omega})$, there exists $c$ such that
\[
l_c\geq u\quad\text{in }\Omega .
\]
Let $c_0$ be the smallest such value:
\[
c_0 = \inf\{ c \in \mathbb{R}: l_c \geq u \quad \text{in} \quad \Omega\}.
\]

The value $c_0$ equals $\max_{\overline\Omega}(u(x)-\langle\xi,x\rangle)$. A maximizer $x_0$ is interior by the boundary comparison in the proof of Lemma \ref{UCS lemma}, which uses only continuity. Thus $l_{c_0}(x_0)=u(x_0)$.
Since $|\xi|<r$, we have $l_{c_0}\in\mathcal{L}_r(u)$. Hence, it follows from the definition of $\Gamma_r$ that
\[
\Gamma_r(x) \leq l_{c_0}(x), \quad \forall \ x \in \Omega.
\]
Hence, $u(x_0)\leq\Gamma_r(x_0)\leq l_{c_0}(x_0)=u(x_0)$, which implies
\[
\Gamma_r(x_0)=u(x_0)\quad \text{and}\quad  x_0\in A_r(u).
\]
Since the plane $l_{c_0}$ supports $\Gamma_r$ from above at $x_0$, it follows that $\xi\in D^+\Gamma_r(x_0)$. Thus,
\[
B_r(0)\subset
\bigcup_{x\in A_r(u)}D^+\Gamma_r(x).
\]

\smallskip
It remains to prove \eqref{E_r-0718}. Since $\Gamma_r$ is concave, by Aleksandrov's theorem (see \cite[Theorem 6.9]{Evans}, for example), we see that it is twice
differentiable almost everywhere. At those points, we have
\[
D^+\Gamma_r(x)=\{D\Gamma_r(x)\}.
\]
Hence
\[
\bigcup_{x\in E_r}D^+\Gamma_r(x)  = D\Gamma_r(E_r).
\]
\end{proof}

Now for some open domain $\Omega \subset \R^n$, it follows from Rademacher's theorem (see \cite[Theorem 3.2, p. 103]{Evans}) that if $w \in C^{1,1}_{\text{loc}}(\Omega)$, then $D^2 w(x)$ exists almost everywhere in $\Omega$. Our next lemma records the area estimate in the regular case needed below. Without the stated Lipschitz condition on the gradient, the gradient image of the singular set of a concave function may have positive measure.
\begin{lemma}\label{area lemma}
Let $w\in C^{1,1}_{\mathrm{loc}}(\Omega)$ and let $E\Subset\Omega$ be measurable. For every nonnegative Lebesgue measurable function $h:\mathbb R^n\to[0,\infty]$, then it holds that
\[
\int_{Dw(E)}h(\xi)\,d\xi
\leq\int_E h(Dw(x))|\det D^2w(x)|\,dx.
\]
\end{lemma}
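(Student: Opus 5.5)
We reduce Lemma \ref{area lemma} to the classical area formula for Lipschitz maps, applied to the gradient map $T=Dw$. Since $E\Subset\Omega$, we can choose an open set $U$ with $E\subset U\Subset\Omega$ on which $Dw$ is Lipschitz. Thus $T:U\to\mathbb R^n$ is a Lipschitz map. By Rademacher's theorem it is differentiable almost everywhere, and its Jacobian matrix there is $D^2w(x)$. The area formula (e.g.\ \cite[Theorem 3.9]{Evans}) then gives, for every nonnegative measurable $h$,
\[
\int_E h(Dw(x))\,|\det D^2w(x)|\,dx=\int_{\mathbb R^n} h(\xi)\,N(Dw|_E,\xi)\,d\xi ,
\]
where $N(Dw|_E,\xi)=\#\{x\in E: Dw(x)=\xi\}$ is the multiplicity function.

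To obtain the inequality, observe that $N(Dw|_E,\xi)\ge \chi_{Dw(E)}(\xi)$ for every $\xi$. Since $h\ge0$, the right-hand side above is bounded below by $\int_{Dw(E)}h\,d\xi$, which is the claim.

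The main point requiring care is measurability. We must check that $Dw(E)$ is Lebesgue measurable and that $h\circ Dw$ is measurable, so that both sides make sense.
\begin{itemize}
\item For $Dw(E)$: write $E$ as the union of a countable union of compact sets (an $F_\sigma$ set) and a null set. Lipschitz maps send compact sets to compact sets and null sets to null sets, so $Dw(E)$ is measurable.
\item For $h\circ Dw$: here $h$ is only Lebesgue measurable, so $h\circ Dw$ need not be measurable on all of $E$. To handle this, modify $h$ on a null set $Z$ to obtain a Borel function $\tilde h$. The preimage $(Dw)^{-1}(Z)\cap E$ may have positive measure, but the area formula applied to $\chi_Z$ shows that $|\det D^2w|=0$ almost everywhere on it. Hence the integrand $h(Dw)|\det D^2w|$ is measurable up to null sets, with the convention that it is taken as $0$ where $\det D^2w=0$. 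This is the convention referred to in the proof of Theorem \ref{ABP thm}.
\end{itemize}
With this bookkeeping, the identity holds with $\tilde h$ in place of $h$, and the left-hand side of the lemma is unchanged, since $\int_{Dw(E)}h=\int_{Dw(E)}\tilde h$.

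I expect this measurability step to be the only delicate part. The inequality itself follows at once from the area formula together with the bound that the multiplicity is at least one on the image $Dw(E)$.
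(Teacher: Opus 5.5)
Your proposal is correct and follows essentially the same route as the paper: you apply the area formula to the Lipschitz map $Dw$ and use that the multiplicity is at least one on $Dw(E)$, then replace $h$ by a Borel representative and apply the area formula to the preimage of the (Borel) null set $Z$ to see that $|\det D^2w|$ vanishes almost everywhere there. The only difference is cosmetic: you use a single neighborhood $U\Subset\Omega$ on which $Dw$ is Lipschitz, which is legitimate since $E\Subset\Omega$ (extend $Dw|_U$ to a Lipschitz map on $\mathbb R^n$ to invoke the area formula as stated), whereas the paper partitions $E$ into pieces and sums.
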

\begin{proof}
Put $T=Dw$ and $J_T=|\det DT|$, with $J_T=0$ where $T$ is not differentiable. Note that the map $T$ is locally Lipschitz, and therefore $DT$ exists almost everywhere in $\Omega$ due to Rademacher's theorem. Partition $E$ into disjoint measurable subsets contained in neighborhoods on which $T$ is Lipschitz. For nonnegative Borel functions $h$, the area formula on these subsets, followed by summation, gives
\[
\int_E h(T(x))J_T(x)\,dx
=\int_{\mathbb R^n}h(\xi)N(T,E,\xi)\,d\xi,
\]
where $N(T,E,\xi)$ is the number of preimages of $\xi$ in $E$; see \cite[Chapter 3]{Evans}. Since $N(T,E,\xi)\geq1$ on $T(E)$, the asserted inequality follows in this case. Locally Lipschitz maps send null sets to null sets, so the points where $T$ is not differentiable do not affect the left-hand integral.

For a general Lebesgue measurable $h$, choose a nonnegative Borel representative $\widetilde h$ equal to $h$ outside a Borel null set $Z$. The area formula applied to $E\cap T^{-1}(Z)$ gives
\[
\int_{E\cap T^{-1}(Z)}J_T(x)\,dx=0.
\]
Thus $E\cap T^{-1}(Z)\cap\{J_T>0\}$ has measure zero. With the stated convention on $\{J_T=0\}$, the product $h(T)J_T$ is measurable and agrees almost everywhere with $\widetilde h(T)J_T$. This proves the assertion. The same argument justifies the use of essential suprema of the gradient weights on the set where the contact Hessian has positive determinant.
\end{proof}
The following lemma explains how the viscosity inequality can be evaluated at an Aleksandrov contact point.
\begin{lemma} \label{Gamma viscosity test lemma}
Let $u\in C(\overline{\Omega})$ be a viscosity sub-solution of \eqref{main eqn} for some given $f$, where $F$ and $G$ are continuous. Let $\Gamma_r$ be the truncated concave envelope of $u$ defined in Definition \ref{truncated envelope}. Then
\[
F(x_0,u(x_0),D\Gamma_r(x_0),D^2\Gamma_r(x_0))
+
G(x_0,u(x_0),D\Gamma_r(x_0))
\leq f(x_0)
\]
for every Aleksandrov point $x_0\in A_r(u)$ of $\Gamma_r$.
\end{lemma}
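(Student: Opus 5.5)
The plan is the standard touching-from-above argument: build a quadratic test function from the second-order expansion of $\Gamma_r$ at $x_0$, plug it into Definition \ref{vis soln def}, and remove the perturbation using continuity of $F$.

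Fix an Aleksandrov point $x_0\in A_r(u)$ of $\Gamma_r$. By definition there are $p=D\Gamma_r(x_0)\in\mathbb R^n$ and $X=D^2\Gamma_r(x_0)\in S(n)$ such that
\[
\Gamma_r(x)=\Gamma_r(x_0)+\langle p,x-x_0\rangle+\tfrac12\langle X(x-x_0),x-x_0\rangle+o(|x-x_0|^2)\quad\text{as }x\to x_0 .
\]
Two further facts are needed. First, $\Gamma_r\geq u$ in $\Omega$: this holds because every $l\in\mathcal L_r(u)$ satisfies $l\geq u$, and $\Gamma_r$ is their infimum. Second, $\Gamma_r(x_0)=u(x_0)$, since $x_0\in A_r(u)$.

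For $\varepsilon>0$, define the quadratic polynomial
\[
P_\varepsilon(x)=u(x_0)+\langle p,x-x_0\rangle+\tfrac12\langle (X+\varepsilon I)(x-x_0),x-x_0\rangle ,
\]
which belongs to $C^2(\Omega)$. By the expansion, there is $\rho=\rho(\varepsilon)>0$ such that the $o(|x-x_0|^2)$ remainder is at most $\tfrac{\varepsilon}{2}|x-x_0|^2$ on $B_\rho(x_0)\subset\Omega$. Absorbing this remainder with the extra $\tfrac{\varepsilon}{2}|x-x_0|^2$ built into $P_\varepsilon$ gives $\Gamma_r\leq P_\varepsilon$ on $B_\rho(x_0)$. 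Combining with the two facts above,
\[
u\leq\Gamma_r\leq P_\varepsilon \ \text{ in } B_\rho(x_0),\qquad u(x_0)=P_\varepsilon(x_0).
\]
Hence $u-P_\varepsilon$ has a local maximum at $x_0$.

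Definition \ref{vis soln def} then gives
\[
F(x_0,u(x_0),p,X+\varepsilon I)+G(x_0,u(x_0),p)\leq f(x_0).
\]
Letting $\varepsilon\downarrow0$ and using continuity of $F$ in the matrix variable yields the claim, since $p=D\Gamma_r(x_0)$ and $X=D^2\Gamma_r(x_0)$.

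The only delicate point is the step that absorbs the $o(|x-x_0|^2)$ remainder. The $\varepsilon I$ perturbation is exactly what makes this work. Without it, one cannot guarantee that $X$ itself yields a quadratic lying above $\Gamma_r$, because the remainder has no sign. The price of the perturbation is the final limit, and that is precisely where continuity of $F$ is used. No regularity of $u$ beyond continuity is needed.
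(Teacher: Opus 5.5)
Your proposal is correct and matches the paper's proof essentially step for step. Both use the $\varepsilon$-perturbed quadratic from the Aleksandrov expansion, which touches $u$ from above via $u\le\Gamma_r$ and $\Gamma_r(x_0)=u(x_0)$, followed by the limit $\varepsilon\downarrow0$. Your remark that only continuity of $F$ in the matrix variable is needed for the limit is a harmless sharpening, since the $G$ term does not depend on $\varepsilon$, whereas the paper invokes continuity of both $F$ and $G$.
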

\begin{proof}
Since $\Gamma_r$ is twice differentiable at $x_0$ in the
Aleksandrov sense (see \cite[Theorem 6.9]{Evans}), there are $D\Gamma_r(x_0) \in \mathbb{R}^n$ and $D^2\Gamma_r(x_0) \in S(n)$ such that
\begin{align*}
\Gamma_r(x) & =
\Gamma_r(x_0) +
\langle D\Gamma_r(x_0),x-x_0\rangle +
\frac12
\langle D^2\Gamma_r(x_0)(x-x_0),x-x_0\rangle
\\
& \qquad +
o(|x-x_0|^2), \quad \text{for} \ x \ \text{sufficiently close to }  x_0.
\end{align*}
Therefore, for every $\varepsilon>0$, the function
\[
\psi_\varepsilon(x)
=
\Gamma_r(x_0)
+
\langle D\Gamma_r(x_0),x-x_0\rangle
+
\frac12
\langle
(D^2\Gamma_r(x_0)+\varepsilon I_n)(x-x_0),
x-x_0
\rangle
\]
lies above $\Gamma_r$ in a sufficiently small neighborhood of $x_0$. Since $\Gamma_r\geq u$ and
\[
\Gamma_r(x_0)=u(x_0),
\]
we have
\[
\psi_\varepsilon\geq u \quad \text{
near $x_0$ and} \quad  \psi_\varepsilon(x_0)=u(x_0).
\]
 Hence $\psi_\varepsilon$ is an
admissible test function from above for the sub-solution $u$. It follows that
\[
F(x_0,u(x_0),D\psi_\varepsilon(x_0),
D^2\psi_\varepsilon(x_0)) + G(x_0,u(x_0),D\psi_\varepsilon(x_0))
\leq f(x_0).
\]
Moreover,
\[
D\psi_\varepsilon(x_0)=D\Gamma_r(x_0), \quad \text{and} \quad
D^2\psi_\varepsilon(x_0)
=
D^2\Gamma_r(x_0)+\varepsilon I .
\]
Therefore,
\[
F(x_0,u(x_0),D\Gamma_r(x_0),
D^2\Gamma_r(x_0)+\varepsilon I_n) + G(x_0,u(x_0),D\Gamma_r(x_0))
\leq f(x_0).
\]
Letting $\varepsilon\to0$ and using the continuity of $F$ and $G$ gives
\[
F(x_0,u(x_0),D\Gamma_r(x_0),D^2\Gamma_r(x_0))
+
G(x_0,u(x_0),D\Gamma_r(x_0))
\leq f(x_0).
\]
The proof is completed.
\end{proof}

\subsection{ABP Estimate for Viscosity Solutions}
We now prove Theorem \ref{G-ABP-thm}. The upper contact set from the classical argument in the proof of Theorem \ref{ABP thm} can be used for a $C^{1,1}_{\mathrm{loc}}$ viscosity sub-solution. At almost every point of that set, its quadratic Taylor polynomial with a positive quadratic perturbation is an admissible viscosity test function.
\begin{proof}[Proof of Theorem \ref{G-ABP-thm}]
Let $c=\sup_{\partial\Omega}u^+$, $v=u-c$, and set
\[
M=\frac{\sup_\Omega u^+-c}{\operatorname{diam}(\Omega)}.
\]
The estimate is immediate when $M\leq\gamma_0$. Suppose $M>\gamma_0$ and fix $r\in(\gamma_0,M)$. Let $C_r$ be the set of all $x\in\overline\Omega$ such that an affine function of slope $\xi\in\overline B_r$ touches $v$ from above at $x$ and lies above $v$ throughout $\overline\Omega$. Passing to limits in the supporting inequalities shows that $C_r$ is closed in $\overline\Omega$, by compactness of $\overline B_r$. Its contact points lie in a compact subset of $\Omega$: if $x\in C_r$, comparison with a maximum point of $v$ gives $v(x)\geq(M-r)\operatorname{diam}(\Omega)>0$, and uniform continuity separates such points from the boundary.

\smallskip
For every $\xi\in B_r$, the continuous function $v(y)-\langle\xi, y\rangle$ attains its maximum at an interior point $x$. Indeed, if a boundary point $y$ maximized it, comparison with a point $z$ where $v$ attains its positive maximum would give
\[
M\operatorname{diam}(\Omega)\leq v(z)\leq\langle\xi,(z-y)\rangle\leq r\operatorname{diam}(\Omega),
\]
a contradiction. At that interior point, $Dv(x)=\xi$ and $x\in C_r$. Thus $B_r\subset Dv(C_r)$. Moreover, wherever $D^2v$ exists on $C_r$, the supporting affine function implies $D^2v\leq0$.

Split $C_r$ into $C_r^1=C_r\cap\{|Dv|<\gamma_0\}$ and $C_r^2=C_r\cap\{|Dv|\geq\gamma_0\}$. By Lemma \ref{area lemma} and the inclusion just proved,
\begin{equation}\label{vis-area-corrected}
\int_{B_r}g(\xi)^{p_0}\,d\xi\leq
\int_{B_{\gamma_0}}g(\xi)^{p_0}\,d\xi
+\int_{C_r^2}g(Dv)^{p_0}\det(-D^2v)\,dx.
\end{equation}
Here and below, $g=g_1g_2$ as defined in \eqref{H-def}. We apply the viscosity inequality at almost every point $x\in C_r^2$. By Rademacher's theorem, $Du$ is differentiable almost everywhere. Its derivative agrees almost everywhere with the weak Hessian and is symmetric. At each such point $x$, writing $A=D(Du)(x)$ and integrating along the segment from $x$ to $x+h$ gives
\[
\begin{split}
u(x+h)-u(x)-\langle Du(x), h\rangle
&=\int_0^1\left\langle\big(Du(x+th)-Du(x)\big), h\right\rangle\,dt\\
&=\tfrac12 \langle h ,Ah\rangle+o(|h|^2).
\end{split}
\]
Consequently $D^2u(x)=A$ in the second-order expansion. For every $\varepsilon>0$, the polynomial
\[
P_\varepsilon(y)=u(x)+\langle Du(x),(y-x)\rangle
+\tfrac12(y-x)\cdot A(y-x)+\varepsilon|y-x|^2
\]
touches $u$ from above in a sufficiently small neighborhood of $x$. Letting $\varepsilon\downarrow0$ and using continuity of $F$ and $G$ yields
\[
F(x,u(x),Du(x),D^2u(x))+G(x,u(x),Du(x))\leq f(x).
\]
At almost every such point with $\det(-D^2u)>0$, the gradient also avoids the null set of slopes where $g_2$ exceeds its essential supremum. By Assumption \ref{F-assumption}, $F\geq0$ on this set and
\[
\begin{split}
g(Du)^{p_0}\det(-D^2u)
&\leq\mu(x)g_2(Du)^{p_0}F(x,u,Du,D^2u)^{p_0}\\
&\leq2^{p_0-1}\mu(x)\left(\|g_2\|_\infty^{p_0}(f^+)^{p_0}+b(x)^{p_0}\right).
\end{split}
\]
The same bound holds trivially where the determinant vanishes. Inserting this estimate into \eqref{vis-area-corrected} and using \eqref{gamma-def} gives
\[
\|g\|_{L^{p_0}(B_r)}\leq\gamma.
\]
Let $r\uparrow M$. If $\gamma=0$, the inequality for any $r>0$ already gives a contradiction. Otherwise, since $\gamma<\|g\|_{L^{p_0}(\mathbb R^n)}$ and the radial integral of $g^{p_0}$ is strictly increasing, $M\leq H(\gamma)$. Combining this with the case $M\leq\gamma_0$ gives \eqref{g-abp}.
\end{proof}

\section{Applications} \label{mean-curve}
We apply Theorem \ref{G-ABP-thm} and its contact-set argument to a class of generalized Monge-Amp\`ere equations, Hessian equations, Hessian quotients under an additional condition on the contact Hessian, and anisotropic mean-curvature-type equations. For the mean curvature applications, we treat the bounded normalized drift and the normalized quadratic gradient term separately. In this section, we always assume $\Omega \subset \mathbb{R}^n$ is open and bounded.

\subsection{ABP Estimates for  Generalized Monge-Amp\`ere Equations} \label{Monge-sec} 
An application of Theorem \ref{G-ABP-thm} is for the class of generalized Monge-Amp\`ere equations that are nonlinear in $(x,u,Du)$. We formulate the equation of interest as \eqref{MA eqn}.
\begin{equation}\label{MA eqn}
\Phi(x,u(x),Du(x))\textsf{det}(-D^2u(x))+G(x,u(x),Du(x)) = f(x)\quad\text{on}\ \Omega
\end{equation}
where $\Omega \subset\mathbb{R}^n$ is an open bounded domain with boundary $\partial \Omega$, $f : \Omega \rightarrow \mathbb{R}$ is a given function, and $\Phi:\Omega\times\mathbb R\times\mathbb R^n\to(0,\infty)$ and $G:\Omega\times\mathbb R\times\mathbb R^n\to\mathbb R$ are functions satisfying the following conditions. We would like to point out that related Monge-Amp\`ere type equations appear in the optical design for illumination applications, optimal transportation, geometric optics, and conformal geometry (e.g. \cite{JTY, GN, Prins}).

\smallskip
We consider concave sub-solutions. For viscosity tests, the equation is understood using the continuous extension
\begin{equation}\label{MA-extension}
\widehat F(x,z,\xi,X)=\Phi(x,z,\xi)\det((-X)_+),\qquad X\in S(n).
\end{equation}
Thus a test function touching $u$ from above is evaluated by \eqref{MA-extension}. On non-positive Hessians, this agrees with \eqref{MA eqn}. The extension is non-increasing in $X$ by the monotonicity of the ordered eigenvalues. The unrestricted signed polynomial $\det(-X)$ is not used for upper tests with indefinite or positive Hessians.

\begin{assumption}\label{G-assumption} We assume that the following conditions hold.
\begin{enumerate}[label=\normalfont(\arabic*)]
\item\label{G-assumption-1} $\Phi, G \in C(\overline{\Omega}\times\mathbb R\times \mathbb R^n)$ and $\Phi>0$.
\item\label{G-assumption-2} There exists $\mu:\mathbb R^n\to (0, \infty)$ locally integrable such that 
\[
1\le \mu(x)\Phi(x,z,\xi), \qquad (x,z,\xi)\in\Omega\times\mathbb R\times(\mathbb R^n\setminus B_{\gamma_0})
\] 
for some $\gamma_0>0$.
\item\label{G-assumption-3} There exist a nonnegative function $b\in L^1(\Omega, \mu)$ and a constant $m\in [0,n)$ such that
\[ 
|G(x,z,\xi)| \leq b(x)|\xi|^m, \quad (x,z,\xi)\in\Omega\times\mathbb R\times (\mathbb R^n \setminus B_{\gamma_0}).
\]
\end{enumerate}
\end{assumption}

We now state and prove the main theorem of this subsection.

\begin{theorem}\label{monge-ampere}
Let $\Phi,\ G:\Omega\times\mathbb R\times\mathbb R^n\to \mathbb R$ be measurable functions satisfying \textup{Assumption \ref{G-assumption}}. Let $f\in C(\Omega)$ satisfy $f^+\in L^1(\Omega,\mu)$. Then, for any concave viscosity sub-solution $u\in C(\overline\Omega)\cap C^{1,1}_{\mathrm{loc}}(\Omega)$ to equation \eqref{MA eqn}, we have
\begin{equation}\label{MA-estimate}
\sup_\Omega u^+\le \sup_{\partial\Omega}u^+ +\textup{diam}(\Omega)\max\big\{\gamma_0, I\left(1+\|b\|_{L^1(\Omega,\mu)}\right)\big\},
\end{equation}
where $b(x)$ is the function given in Assumption \ref{G-assumption}, and $I(s)$ is defined as
\begin{equation}\label{function I}
I(s)=
\begin{cases}
\displaystyle
\left(\frac{s}{s_0}\right)^{\frac{1}{n}}, & 0\le s\leq s_0,\\[1.2em]
\displaystyle
\left[1+\left(1-\frac{m}{n}\right)\left(\frac{s}{s_0}-1\right)\right]^{1/(n-m)}, & s> s_0,
\end{cases}
\end{equation}
with $s_0=\omega_n(1+\|f^+\|_{L^1(\Omega,\mu)})^{-1}>0$, and $\omega_n$ being the Lebesgue measure of the unit ball in $\mathbb{R}^n$.
\end{theorem}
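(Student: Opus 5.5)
The plan is to rerun the contact-set argument from the proof of Theorem \ref{G-ABP-thm} directly, with $p_0=1$ and a gradient weight adapted to the growth $|\xi|^m$, $m<n$, rather than quoting Theorem \ref{G-ABP-thm} as a black box. Quoting it would force a bounded $g_2$ and would produce the constant $2^{p_0-1}$ and the additive structure of \eqref{gamma-def}, not the function $I$.

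\emph{Setting up the contact set.} Set $v=u-\sup_{\partial\Omega}u^+$ and $M$ as in the proof of Theorem \ref{G-ABP-thm}. Assume $M>\gamma_0$ and fix $r\in(\gamma_0,M)$. Since $u$ is concave and $C^{1}$, every tangent plane is a global upper support, so the set $C_r$ of that proof is just $\{x: |Du(x)|\le r\}$. Moreover $D^2u\le 0$ a.e., and $B_r\subset Du(C_r)$ by the interior-maximum argument. At a.e.\ point of $C_r$, the second-order Taylor polynomial plus $\varepsilon|y-x|^2$ is an upper test function, exactly as in the proof of Theorem \ref{G-ABP-thm}. Applying it to the extension \eqref{MA-extension} and letting $\varepsilon\downarrow0$ uses continuity of $\widehat F$, which holds because $X\mapsto\det((-X)_+)$ is continuous. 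This gives, a.e.\ on $C_r$,
\[
\Phi(x,u,Du)\det(-D^2u)\le f^+(x)+|G(x,u,Du)|.
\]
Combining this with Assumption \ref{G-assumption}\ref{G-assumption-2}--\ref{G-assumption-3}, wherever $|Du|\ge\gamma_0$ we get
\[
\det(-D^2u)\le \mu(x)\big(f^+(x)+b(x)|Du|^m\big).
\]

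\emph{Choice of weight.} Take $w(\xi)=\min\{1,|\xi|^{-m}\}$, possibly rescaled as $w(\xi/\lambda)$, with $\lambda$ tuned so that the threshold $s_0=\omega_n(1+\|f^+\|_{L^1(\Omega,\mu)})^{-1}$ comes out. This weight satisfies $w(\xi)\big(f^++b|\xi|^m\big)\le f^++b$. Applying Lemma \ref{area lemma} on $C_r\cap\{|Du|\ge\gamma_0\}$, whose image contains $B_r\setminus B_{\gamma_0}$, gives
\[
\int_{B_r\setminus B_{\gamma_0}} w\,d\xi\le \|f^+\|_{L^1(\Omega,\mu)}+\|b\|_{L^1(\Omega,\mu)}.
\]
The radial integral is explicit:
\[
\int_{B_\rho}w=\omega_n\rho^n \ \text{ for } \rho\le1,\qquad \int_{B_\rho}w=\omega_n\Big[1+\tfrac{n}{n-m}\big(\rho^{n-m}-1\big)\Big] \ \text{ for } \rho>1 .
\]
Inverting these two branches gives exactly the two branches of $I$ in \eqref{function I}: the $(s/s_0)^{1/n}$ regime and the $[1+(1-m/n)(s/s_0-1)]^{1/(n-m)}$ regime. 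Since $m<n$, $\int_{\mathbb R^n}w=\infty$, so no smallness condition arises. Letting $r\uparrow M$ then yields $M\le\max\{\gamma_0, I(\cdot)\}$.

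\emph{Main obstacle.} The expected difficulty is matching constants. The data enter through the product $(1+\|b\|)(1+\|f^+\|)/\omega_n=s/s_0$ rather than the sum $\|f^+\|+\|b\|$, and the excluded ball $B_{\gamma_0}$ must be absorbed. I would first try normalizing the weight by the factor $1+\|f^+\|_{L^1(\Omega,\mu)}$, that is, using $\tilde w=w/(1+\|f^+\|)$ together with the bound $f^++b|\xi|^m\le(1+\|f^+\|)(1+b)\max\{1,|\xi|^m\}$ after integration. The hope is that the right side becomes $\le 1+\|b\|_{L^1}$ while the left side equals $\int_{B_r}w\,/(1+\|f^+\|)$, whose inverse is $I$.

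For the small gradients, the plan is a case split. When $r\le\gamma_0$ the bound $\max\{\gamma_0,\cdot\}$ already suffices. Otherwise the additional "$1$" in $1+\|b\|$ should dominate the missing piece $\int_{B_{\gamma_0}}\tilde w$. If this fails for large $\gamma_0$, I would switch to a weight vanishing-insensitive argument: apply the area formula only on the annulus and compare $\int_{B_r\setminus B_{\gamma_0}}w$ with $\int_{B_{r'}}w$ for a smaller $r'$, since $r\mapsto\int_{B_r}w$ is increasing. Then conclude via $\max\{\gamma_0,\cdot\}$.
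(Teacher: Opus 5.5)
\textbf{Main gap: the ball $B_{\gamma_0}$.} Write $a=\|f^+\|_{L^1(\Omega,\mu)}$, $B=\|b\|_{L^1(\Omega,\mu)}$, $w(\xi)=\min\{1,|\xi|^{-m}\}$ and $W(\rho)=\int_{B_\rho}w$.

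\begin{itemize}
\item Your contact-set argument correctly gives $W(r)-W(\gamma_0)\le a+B$ for $\gamma_0<r<M$.
\item $W=(1+a)L_a$, where $L_a$ is exactly the function whose inverse is $I$.
\item This yields \eqref{MA-estimate} only when $W(\gamma_0)\le 1+aB$. In that case $(1+a)L_a(r)\le W(\gamma_0)+a+B\le(1+a)(1+B)$.
\item Otherwise your fallback fails. Suppose, for instance, $W(\gamma_0)>(1+a)(1+B)$, so $\max\{\gamma_0,I(1+B)\}=\gamma_0$. For $r$ just above $\gamma_0$ the annulus integral is arbitrarily small, so your inequality holds and nothing excludes $M$ slightly larger than $\gamma_0$. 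Passing to a smaller radius $r'$ bounds $r'$, not $r$.
\end{itemize}

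What your argument actually proves is $M\le W^{-1}(W(\gamma_0)+a+B)$. This is not of the max form in \eqref{MA-estimate}.

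\textbf{How the paper proceeds.} The paper does not resolve this point either. It applies Theorem \ref{G-ABP-thm} with $\gamma_0=0$, $p_0=1$, $g_1=1$ and $g_2=(|\xi|^m+a)^{-1}$.
\begin{itemize}
\item Taking $\gamma_0=0$ uses $1\le\mu\Phi$ and $|G|\le b|\xi|^m$ at every gradient. Assumption \ref{G-assumption} supplies these only for $|\xi|\ge\gamma_0$.
\item Keeping the genuine $\gamma_0$ would add $\omega_n\gamma_0^n/a$ to $\gamma$ in \eqref{gamma-def} when $m>0$, and the clean value $1+B$ would be lost.
\item With $\gamma_0=0$, it gets $\gamma\le 1+B$. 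It then shows $\|g_2\|_{L^1(B_r)}\ge L_a(r)$ using $\rho^m+a\le 1+a$ on $[0,1]$ and $\rho^m+a\le(1+a)\rho^m$ on $[1,r]$, and inverts.
\item The case $a=0$ is handled by replacing $f$ with $f^++\varepsilon$.
\end{itemize}
So your stated reason for avoiding the theorem is unfounded. With $p_0=1$ the factor $2^{p_0-1}$ equals $1$, and this $g_2$ is bounded for $a>0$.

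\textbf{Under the paper's reading, your route is complete and stronger.} If you also assume the structural bounds for every $\xi$, your direct argument goes through and improves on the paper's.
\begin{itemize}
\item $W(M)\le a+B$ gives $M\le I\big(\tfrac{a+B}{1+a}\big)\le I(1+B)$.
\item No approximation at $a=0$ is needed, since $w$ does not depend on $a$.
\item For $B=0$ and $a\le\omega_n$ you recover $(a/\omega_n)^{1/n}$. This is the zero-drift bound that the remark after the theorem says \eqref{MA-estimate} does not capture.
\end{itemize}

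\textbf{Two smaller fixes.}
\begin{itemize}
\item Drop the bound $f^++b|\xi|^m\le(1+\|f^+\|)(1+b)\max\{1,|\xi|^m\}$. It is false pointwise and unnecessary; all the normalization requires is $\tfrac{a+B}{1+a}\le 1+B$.
\item Keep the contact set $C_r$ from the proof of Theorem \ref{G-ABP-thm} rather than identifying it with $\{|Du|\le r\}$. That identification needs tangent planes to be global supports, which requires $\Omega$ to be convex, and this is not assumed.
\end{itemize}
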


\begin{proof}
Let $a=\|f^+\|_{L^1(\Omega,\mu)}$ and $B=\|b\|_{L^1(\Omega,\mu)}$. We first suppose $a>0$ and take $\gamma_0=0$, $p_0=1$, and
\[
g_1(\xi)=1,\qquad g_2(\xi)=\frac{1}{|\xi|^m+a}.
\]
The functions $g_1,g_2$ are bounded and positive. For $X\leq0$, Assumption \ref{G-assumption}-\ref{G-assumption-2} gives
\[
\det(-X)\leq\mu(x)\Phi(x,z,\xi)\det((-X)_+)
=\mu(x)\widehat F(x,z,\xi,X).
\]
Also,
\[
g_2(\xi)|G(x,z,\xi)|
\leq\frac{|\xi|^m}{|\xi|^m+a}b(x)\leq b(x).
\]
Thus Assumption \ref{F-assumption} holds. By \eqref{gamma-def},
\begin{equation}\label{gamma3}
\gamma=a\|g_2\|_\infty+B\leq1+B.
\end{equation}
For $m>0$ there is equality, whereas for $m=0$ the first term equals $a/(1+a)$.

By polar coordinates,
\[
Q_a(r)=\|g_1g_2\|_{L^1(B_r)}
=n\omega_n\int_0^r\frac{\rho^{n-1}}{\rho^m+a}\,d\rho.
\]
Since $m < n$, we have
\begin{equation}\label{H-infinity}
\lim_{r\rightarrow \infty}Q_a(r)=\|g_1g_2\|_{L^1(\mathbb R^n)}=\infty.
\end{equation}
Let 
\[
L_a(r) = \left\{
\begin{array}{ll}
\frac{\omega_n}{1+a}r^n & \quad \text{if} \quad r \in [0, 1]\\
\frac{\omega_n}{1+a}
+\frac{n\omega_n}{(1+a)(n-m)}(r^{n-m}-1) & \quad \text{if} \quad r >1.
\end{array} \right.
\]
For $0\leq r\leq1$, $\rho^m+a\leq1+a$ on the interval of integration, and hence
\[
Q_a(r)\geq\frac{\omega_n}{1+a}r^n.
\]
For $r\geq1$, use $\rho^m+a\leq(1+a)\rho^m$ on $[1,r]$ to obtain
\[
Q_a(r)\geq\frac{\omega_n}{1+a}
+\frac{n\omega_n}{(1+a)(n-m)}(r^{n-m}-1).
\]
Hence, we have
\[
Q_a(r) \geq L_a(r), \quad \forall \ r \geq 0.
\]
We also note that $L_a(r)$  is continuous and strictly increasing from zero to infinity. Its inverse is exactly the function $I$ in \eqref{function I}, with $s_0=\omega_n/(1+a)$. Since $Q_a\geq L_a$, their inverses satisfy
\begin{equation}\label{H-I}
H(s)=Q_a^{-1}(s)\leq L_a^{-1}(s)=I(s),\qquad s\geq0.
\end{equation}
Theorem \ref{G-ABP-thm}, \eqref{gamma3}, and \eqref{H-I} now give
\[
\begin{split}
\sup_\Omega u^+
&\leq\sup_{\partial\Omega}u^++\operatorname{diam}(\Omega)\max\{\gamma_0, H(\gamma)\}\\
&\leq\sup_{\partial\Omega}u^++\operatorname{diam}(\Omega)\max\{ \gamma_0, I(1+B)\}.
\end{split}
\]
If $a=0$, apply this argument with $f_\varepsilon=f^++\varepsilon$. The function $u$ remains a sub-solution, and $a_\varepsilon=\|f_\varepsilon\|_{L^1(\Omega,\mu)}\to0$ because $\mu(\Omega)<\infty$. The explicit expression \eqref{function I} is continuous in $a\geq0$, including where its two branches meet. Passing to the limit gives the stated estimate when $a=0$.
\end{proof}

\begin{remark}
When $G=0$, the direct choice $g_1=g_2=1$ gives the sharper bound
\[
\sup_\Omega u^+\leq\sup_{\partial\Omega}u^+
+\operatorname{diam}(\Omega)\left(\frac{\|f^+\|_{L^1(\Omega,\mu)}}{\omega_n}\right)^{\frac{1}{n}}.
\]
For $\Phi=1$ and $\mu=1$, this is the classical Alexandrov estimate for the Monge-Amp\`ere equation; see \cite{GT, HP}. In particular, the estimate vanishes with the positive part of the source. The expression \eqref{MA-estimate} is a convenient uniform bound in the presence of the gradient term, and is not intended to give this sharper zero-drift dependence.
\end{remark}

\begin{remark}\label{critical-MA}
The same argument also treats growth of order $n$. More precisely, retain the other hypotheses of Theorem \ref{monge-ampere}, and replace the last condition in Assumption \ref{G-assumption} by
\[
|G(x,z,\xi)|\leq b(x)|\xi|^n,\qquad b\geq0,\quad b\in L^1(\Omega,\mu).
\]
Put $a=\|f^+\|_{L^1(\Omega,\mu)}$ and $B=\|b\|_{L^1(\Omega,\mu)}$. Then
\[
\sup_\Omega u^+\leq\sup_{\partial\Omega}u^+
+\operatorname{diam}(\Omega)\left[a\left(e^{(1+B)/\omega_n}-1\right)\right]^{\frac{1}{n}}.
\]
Indeed, for $a>0$, take $p_0=1$, $\gamma_0 \rightarrow 0^+$, $g_1=1$, and $g_2(\xi)=(|\xi|^n+a)^{-1}$. The structural assumptions hold, $\gamma=1+B$, and
\[
\|g_1g_2\|_{L^1(B_r)}
=n\omega_n\int_0^r\frac{t^{n-1}}{t^n+a}\,dt
=\omega_n\log(1+r^n/a).
\]
This integral tends to infinity, and its inverse gives the asserted estimate by Theorem \ref{G-ABP-thm}. If $a=0$, apply the estimate with source $f^++\varepsilon$ and let $\varepsilon\downarrow0$. Since $\mu(\Omega)<\infty$, the resulting bound converges to the maximum principle.
\end{remark}
\subsection{ABP Estimates for \texorpdfstring{$k$}{k}-Hessian Equations with Gradient Lower-order Terms} \label{k-Hessian}
To begin with, let us recall the definition of $k$-Hessian operators. Let $X\in S(n)$ with eigenvalues
\[
\lambda_1(X) \leq \lambda_2(X) \leq \ldots \leq \lambda_n(X).
\]
For $k=1,2, \ldots,n$, the $k$-th elementary symmetric function of eigenvalues of $X$ is defined by
\[
\sigma_k(X) =\sum_{1\leq i_1<\cdots<i_k\leq n}
\lambda_{i_1}(X)\cdots\lambda_{i_k}(X).
\]
For a sufficiently smooth function $u:\Omega\rightarrow\mathbb{R}$,
the $k$-Hessian of $u$ is defined as
\[
\sigma_k(D^2u) = S_k(\lambda(D^2u)) =  \sum_{1\leq i_1<\cdots<i_k\leq n}
\lambda_{i_1}(D^2u)\cdots\lambda_{i_k}(D^2u).
\]
where $\lambda(A)$ is the set of eigenvalues for any symmetric matrix $A$. In particular,
\[
\sigma_1(D^2u)=\operatorname{tr}(D^2u)=\Delta u,
\quad \text{and} \quad \sigma_n(D^2u)=\det(D^2u).
\]
The following lemma is known as Newton-Maclaurin inequality, see \cite[Theorem B.1.1]{NP}.
\begin{lemma}\label{sigma-det}
Let $A\in S(n)$ be a non-negative definite matrix. Then, for every
$k=1,\ldots,n$, it holds that
\begin{equation}\label{sigma-det-ineq}
\binom{n}{k}^{\frac{1}{k}}\det(A)^{\frac{1}{n}}
\leq \sigma_k(A)^{\frac{1}{k}},
\end{equation}
\end{lemma}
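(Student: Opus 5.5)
The statement is the Newton--Maclaurin inequality in its determinant form. Since both sides depend only on the eigenvalues $\lambda_1,\dots,\lambda_n\ge 0$ of $A$, I would reduce it to the scalar inequality
\[
\binom{n}{k}\Big(\prod_{i=1}^n\lambda_i\Big)^{\frac{k}{n}}\le S_k(\lambda),
\]
which is equivalent to \eqref{sigma-det-ineq} after raising both sides to the power $k$. Here $\det A=\prod_i\lambda_i$ and $\sigma_k(A)=S_k(\lambda)$ by definition. If some $\lambda_i=0$, the left side vanishes. The right side is a sum of products of nonnegative numbers, so it is $\geq0$ and there is nothing to prove. We may therefore assume every $\lambda_i>0$.

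The main step is to apply the arithmetic--geometric mean inequality to the $N=\binom{n}{k}$ nonnegative terms $\lambda_{i_1}\cdots\lambda_{i_k}$, $1\le i_1<\dots<i_k\le n$. This gives
\[
\frac{S_k(\lambda)}{\binom nk}\ \ge\ \Big(\prod_{i_1<\dots<i_k}\lambda_{i_1}\cdots\lambda_{i_k}\Big)^{1/\binom nk}.
\]
The remaining step is a counting argument. Each fixed index $i$ appears in exactly $\binom{n-1}{k-1}$ of the $k$-subsets. Hence the product over all subsets equals $\prod_i\lambda_i^{\binom{n-1}{k-1}}$. Using $\binom{n-1}{k-1}/\binom nk=k/n$, the right-hand side becomes $(\prod_i\lambda_i)^{k/n}=\det(A)^{k/n}$. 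Multiplying by $\binom nk$ and taking $k$-th roots yields \eqref{sigma-det-ineq}.

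The only point needing care is the combinatorial identity for the exponent; everything else is routine. It can be checked either by double counting or directly:
\[
\binom{n-1}{k-1}\Big/\binom{n}{k}=\frac{(n-1)!\,k!\,(n-k)!}{(k-1)!\,(n-k)!\,n!}=\frac kn.
\]
Alternatively, one could cite the chain of Maclaurin inequalities $(S_k/\binom nk)^{1/k}\ge (S_n)^{1/n}$ from \cite[Theorem B.1.1]{NP}. The direct AM--GM argument above is self-contained and avoids needing the full chain.
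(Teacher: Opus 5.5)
Your proof is correct and is essentially the paper's own argument: AM--GM applied to the $\binom{n}{k}$ products $\lambda_{i_1}\cdots\lambda_{i_k}$, the count that each eigenvalue appears in $\binom{n-1}{k-1}$ of them, and the identity $\binom{n-1}{k-1}/\binom{n}{k}=k/n$. Your separate treatment of a zero eigenvalue is harmless but unnecessary, since AM--GM already holds for nonnegative terms.
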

\begin{proof} Let $\lambda_1,\ldots,\lambda_n\geq 0$ be the eigenvalues of $A$. Then
\[
\sigma_k(A)
=
\sum_{1\leq i_1<\cdots<i_k\leq n}
\lambda_{i_1}\cdots\lambda_{i_k}.
\]
Note that there are exactly $N=\binom{n}{k}$ terms in the above sum. Applying the arithmetic--geometric mean
inequality to these non-negative terms gives
\[
\frac{\sigma_k(A)}{N}
\geq
\left(
\prod_{1\leq i_1<\cdots<i_k\leq n}
\lambda_{i_1}\cdots\lambda_{i_k}
\right)^{\frac1N}.
\]
Now, note that each eigenvalue $\lambda_i$ appears in exactly $\binom{n-1}{k-1}$ of the products $\lambda_{i_1}\cdots\lambda_{i_k}$  above because after fixing $\lambda_i$, the remaining $k-1$ indices can be
chosen from the other $n-1$ indices. Consequently,
\[
\prod_{1\leq i_1<\cdots<i_k\leq n}
\lambda_{i_1}\cdots\lambda_{i_k}
=
\prod_{i=1}^{n}
\lambda_i^{\binom{n-1}{k-1}}
=
\det(A)^{\binom{n-1}{k-1}}.
\]
Therefore,
\[
\sigma_k(A)
\geq
\binom{n}{k}
\det(A)^{
\frac{\binom{n-1}{k-1}}{\binom{n}{k}}
}
=
\binom{n}{k}
\det(A)^{\frac{k}{n}},
\]
since $\binom{n-1}{k-1}/\binom nk=k/n$. Taking the power $\frac1k$ on both sides gives
\[
\sigma_k(A)^{\frac1k}
\geq
\binom{n}{k}^{\frac1k}
\det(A)^{\frac1n},
\]
which is equivalent to \eqref{sigma-det-ineq}. The proof is completed.
\end{proof}
For a symmetric matrix $X$, let $X_+$ denote the positive spectral part, obtained by replacing each negative eigenvalue by zero. In \eqref{k-Hessian equation}, viscosity tests are evaluated using the continuous operator
\[
\widehat F_k(X)=\sigma_k((-X)_+)^{\frac{1}{k}}.
\]
It agrees with $\sigma_k(-X)^{\frac{1}{k}}$ for $X\leq0$ and is non-increasing with respect to the matrix order. This convention is needed because an upper test of a concave function need not itself have a non-positive Hessian.

\smallskip
We now state and prove the following result on the ABP estimate for $k$-Hessian equations with gradient lower-order terms.
\begin{theorem}
\label{k-Hessian ABP lemma}
Let $\Omega\subset\mathbb{R}^n$ be a bounded domain and let
$k\in\{1,\ldots,n\}$. Suppose that a concave function $u\in C(\overline{\Omega})\cap C^{1,1}_{\mathrm{loc}}(\Omega)$ is a viscosity sub-solution of
\begin{equation}\label{k-Hessian equation}
\sigma_k(-D^2u)^{\frac1k}
+
G(x,u,Du)
=
f(x)
\quad\text{in }\Omega ,
\end{equation}
where $G \in C(\Omega\times\mathbb{R}\times\mathbb{R}^n)$ and $f\in C(\Omega)$ with $f^+\in L^n(\Omega)$. Assume that there exists a bounded measurable function
$g_2:\mathbb{R}^n\rightarrow(0,\infty)$ and a function $b\in L^n(\Omega)$ such that
\begin{equation}\label{G-kHessian condition}
g_2(\xi)|G(x,z,\xi)|
\leq b(x),
\qquad
(x,z,\xi)\in
\Omega\times\mathbb{R}\times (\mathbb{R}^n \setminus B_{\gamma_0})
\end{equation}
with some $\gamma_0 >0$.
Let 
\[
 \gamma
= 2^{\frac{n-1}{n}}\Big(
\|g_2\|_{L^\infty(\mathbb{R}^n)}^n
\|f^+\|_{L^n(\Omega)}^n +
\|b\|_{L^n(\Omega)}^n
\Big)^{\frac{1}{n}},
\]
If
\[
\gamma
< N \|g_2\|_{L^n(\mathbb{R}^n)} \quad \text{for} \quad N = \binom{n}{k}^{\frac1k}
\]
then
\[
\sup_\Omega u^+
\leq
\sup_{\partial\Omega}u^+
+
\operatorname{diam}(\Omega) \max\{\gamma_0, H(\gamma)\},
\]
where $H:[0,N\|g_2\|_{L^n(\mathbb R^n)})\to[0,\infty)$ is the inverse defined by $H \big(N \|g_2\|_{L^n(B_r)} \big)=r$ for all $r>0$, with $H(0)=0$.
\end{theorem}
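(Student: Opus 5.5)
The plan is to reduce Theorem \ref{k-Hessian ABP lemma} to Theorem \ref{G-ABP-thm}, or more precisely to rerun its contact-set argument, with $p_0=n$, $\mu\equiv1$, $F=\widehat F_k$, $g_1\equiv N=\binom nk^{1/k}$ and the given $g_2$. The threshold $\gamma_0$ would then be handled by a separate modification of $g_2$ and $b$ on small gradients, since Theorem \ref{G-ABP-thm} itself only gives a $\gamma$ containing the term $|B_{\gamma_0}|\,N^n\|g_2\|_\infty^n$. I have not closed that last step; it is the main obstacle.

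\emph{Step 1: the structural inequality.} For $X\le0$, Lemma \ref{sigma-det} applied to $A=-X$ gives
\[
N\det(-X)^{\frac1n}\le\sigma_k(-X)^{\frac1k}=\widehat F_k(X).
\]
So Assumption \ref{F-assumption}-\ref{F-assumption-2} holds with $g_1\equiv N$, $p_0=n$, $\mu\equiv1$, and in fact for all gradients, not only $|\xi|\ge\gamma_0$. The operator $\widehat F_k$ is continuous, and it agrees with the equation on non-positive Hessians. Concavity of $u$ gives $D^2u\le0$ a.e., so the viscosity inequality at Aleksandrov points, as in the proof of Theorem \ref{G-ABP-thm}, is evaluated with the true operator $\sigma_k(-D^2u)^{1/k}$. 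Assumption \ref{F-assumption}-\ref{F-assumption-3} is exactly \eqref{G-kHessian condition}, with $b\in L^n(\Omega)$.

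\emph{Step 2: removing the $B_{\gamma_0}$ defect.} Set $M=M(u)$ and fix $r\in(\gamma_0,M)$. By Lemma \ref{Lemma-UCS_compact} the contact set $C_r$ lies in some $\Omega_\delta\Subset\Omega$. On $C_r$ the values $u$ lie in a compact interval and the gradients of interest lie in $\overline B_{\gamma_0}$. By continuity of $G$ there is therefore a constant $K$ with $|G|\le K$ on $\Omega_\delta\times[\min u,\max u]\times\overline B_{\gamma_0}$.

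For $\varepsilon>0$ I would define
\[
g_2^\varepsilon=g_2 \text{ on } \mathbb R^n\setminus B_{\gamma_0},\qquad g_2^\varepsilon=\varepsilon \text{ on } B_{\gamma_0},
\qquad b_\varepsilon=b+\varepsilon K\mathbf 1_{\Omega_\delta}.
\]
Then \eqref{G-kHessian condition} holds for $(g_2^\varepsilon,b_\varepsilon)$ with threshold $0$ along the contact set, which is all the argument uses. Also $\|g_2^\varepsilon\|_\infty\le\|g_2\|_\infty$ for $\varepsilon$ small, and $\|b_\varepsilon\|_{L^n}\to\|b\|_{L^n}$. Running the area formula (Lemma \ref{area lemma}) together with the pointwise bound from the proof of Theorem \ref{G-ABP-thm} then gives
\[
N^n\!\int_{B_r}(g_2^\varepsilon)^n\,d\xi\le\gamma_\varepsilon^n,\qquad \gamma_\varepsilon\to\gamma .
\]
Letting $\varepsilon\to0$ yields the annulus bound $N\|g_2\|_{L^n(B_r\setminus B_{\gamma_0})}\le\gamma$ for every $r\in(\gamma_0,M)$.

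\emph{Step 3 (main obstacle): the full-ball bound.} The estimate $r\le H(\gamma)$ needs $N\|g_2\|_{L^n(B_r)}\le\gamma$, that is, the mass of $g_2$ on $B_{\gamma_0}$ as well. The annulus bound does not give this. My first attempt would be to exploit concavity: for concave $u$ every point of $\Omega$ is a contact point. One could then try to split the gradient image differently, for instance by comparing $u$ with a shifted function $u-\langle\xi_0,x\rangle$ so that the low-gradient ball is moved to a region where \eqref{G-kHessian condition} applies. However, a shift changes $G$ in an uncontrolled way.

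If no such device works, the honest conclusion of Steps 1--2 is the weaker estimate. That is, \eqref{g-abp} holds with $\gamma$ augmented by $|B_{\gamma_0}|N^n\|g_2\|_\infty^n$ inside the $n$-th root, exactly as Theorem \ref{G-ABP-thm} gives. Alternatively, the stated $\gamma$ suffices under the additional normalization $\gamma_0\ge H(\gamma)$, in which case the maximum already equals $\gamma_0$. The remaining task is to determine which of these the statement intends.
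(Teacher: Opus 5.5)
\paragraph{Where the obstacle lies.} Your Steps 1 and 2 are sound, and the obstacle you isolate in Step 3 is real. It is exactly the point where the paper's own proof breaks. The paper observes that $N\det(-X)^{1/n}\le\sigma_k(-X)^{1/k}$ holds for every gradient, declares $\gamma_0=0$, and applies Theorem \ref{G-ABP-thm}. But Assumption \ref{F-assumption} uses a single threshold for both conditions, and \eqref{G-kHessian condition} is only assumed on $\R^n\setminus B_{\gamma_0}$. With the honest threshold, Theorem \ref{G-ABP-thm} gives the $\gamma$ of \eqref{gamma-def}, which contains the term $|B_{\gamma_0}|\,N^n\|g_2\|_\infty^n$ that the statement drops. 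Your $\varepsilon$-modification in Step 2 is harmless but unnecessary. The annulus bound $N\|g_2\|_{L^n(B_r\setminus B_{\gamma_0})}\le\gamma$ is just \eqref{vis-area-corrected} with the $B_{\gamma_0}$-integral moved to the left-hand side.

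\paragraph{Step 3 cannot be closed.} The statement as written is false. Take $\Omega=B_1$, $c>0$, and
\[
u(x)=\tfrac c2(1-|x|^2),\quad f\equiv0,\quad G\equiv-Nc,\quad b\equiv Nc,\quad \gamma_0=c/8,
\]
with $g_2=16$ on $B_{\gamma_0}$ and $g_2=1$ elsewhere. Then:
\begin{itemize}
\item $u$ is concave and solves the equation classically.
\item $u$ is a viscosity sub-solution: any upper test satisfies $D^2\varphi(x_0)\ge-cI$, and $\sigma_k((-X)_+)^{1/k}$ is non-increasing in $X$.
\item \eqref{G-kHessian condition} holds with equality.
\item $\|g_2\|_{L^n(\R^n)}=\infty$, so the smallness hypothesis holds trivially.
\end{itemize}
Moreover,
\[
\gamma=2^{\frac{n-1}{n}}Nc\,\omega_n^{\frac1n}<2Nc\,\omega_n^{\frac1n}=N\|g_2\|_{L^n(B_{\gamma_0})},
\]
so $H(\gamma)<\gamma_0$. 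The claimed bound would then read $c/2=\sup_\Omega u^+\le 2\gamma_0=c/4$, which is false.

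\paragraph{Which version is correct.} Your first option is the correct statement: the $\gamma$ of \eqref{gamma-def} with $g_1\equiv N$, $\mu\equiv1$, $p_0=n$. Two other valid repairs are to assume \eqref{G-kHessian condition} for all $\xi$, or to define $H$ through the annulus norm $N\|g_2\|_{L^n(B_r\setminus B_{\gamma_0})}$, which is exactly what your Step 2 proves. Your second option, that the stated $\gamma$ suffices once $\gamma_0\ge H(\gamma)$, is wrong. The example satisfies $H(\gamma)<\gamma_0$ and still violates the conclusion, because the annulus bound controls $M(u)$ only through the mass of $g_2$ outside $B_{\gamma_0}$.
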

\begin{proof} We verify that the $k$-Hessian operator satisfies the structural assumption
of Theorem \ref{G-ABP-thm}. Define $F(X)=\sigma_k((-X)_+)^{\frac{1}{k}}$, where $A_+$ denotes the positive spectral part of a symmetric matrix $A$. This continuous extension agrees with $\sigma_k(-X)^{\frac{1}{k}}$ when $X\leq0$.
Let $\phi\in C^2_{\mathrm{loc}}(\Omega)$ be such that
\[
D^2\phi(x)\leq0.
\]
By the Newton-Maclaurin inequality (Lemma \ref{sigma-det}), it follows that 
\[
\sigma_k(-D^2\phi(x))^{\frac1k}
\geq
N \det(-D^2\phi(x))^{\frac1n}, \quad \text{for} \quad N = \binom{n}{k}^{\frac1k}.
\]
Therefore,
\[
N
\det(-D^2\phi(x))^{\frac1n}
\leq
F(D^2\phi(x)).
\]
Hence Assumption \ref{F-assumption}-\ref{F-assumption-2} is satisfied with
\[
p_0=n,\qquad
g_1(\xi)\equiv N,
\quad \text{and} \quad
\mu(x)\equiv1,
\]
and, since the inequality holds for every gradient $\xi\in\mathbb R^n$, $\gamma_0=0.$

\smallskip
Next, by the assumption on the lower order term, we have
\[
g_2(\xi)|G(x,z,\xi)|
\leq b(x), \quad \forall 
(x,z,\xi)\in\Omega\times\mathbb R\times (\mathbb R^n \setminus B_{\gamma_0}).
\]
Thus Assumption \ref{F-assumption}-\ref{F-assumption-3} is satisfied. Therefore, all hypotheses of Theorem \ref{G-ABP-thm} are fulfilled with
\[
p_0=n.
\]
Then, by applying Theorem \ref{G-ABP-thm}, we obtain
\[
\sup_\Omega u^+
\leq
\sup_{\partial\Omega}u^+
+
\operatorname{diam}(\Omega)
\max\{\gamma_0 ,H(\gamma)\}.
\]
Since $H(\gamma)\geq0$, we obtain
\[
\sup_\Omega u^+
\leq
\sup_{\partial\Omega}u^+
+
\operatorname{diam}(\Omega)\max\{ \gamma_0, H(\gamma) \},
\]
which proves the desired estimate.
\end{proof}

\noindent
When $G=0$, the contact-set argument in the preceding proof gives the sharper estimate
\[
\sup_\Omega u^+\leq\sup_{\partial\Omega}u^+
+\frac{\operatorname{diam}(\Omega)}{\binom nk^{\frac{1}{k}}\omega_n^{\frac{1}{n}}}
\|f^+\|_{L^n(\Omega)}
\]
where $\omega_n = |B_1|$ is the Lebesgue measure of the unit ball in $\mathbb R^n$. Indeed, one integrates $N^n\det(-D^2u)\leq(f^+)^n$ directly, without splitting the source and lower-order terms. This is the usual determinant-majorization estimate; see also \cite[Theorem 4.5]{HP} for polynomial operators and admissible viscosity solutions.

\begin{example}\label{exponential-Hessian}
The gradient weights also allow non-power dependence in a fully nonlinear equation. Let $n\geq2$, $2\leq k\leq n$, $\beta>0$, and let $a>0$ and $q\geq0$ be continuous in $\Omega$. Suppose $\mu=a^{-n}$ in $\Omega$ extends to a positive locally integrable function on $\mathbb R^n$, and $q,f^+\in L^n(\Omega,\mu)$, with $f$ continuous. Consider a viscosity sub-solution $u\in C(\overline\Omega)\cap C^{1,1}_{\mathrm{loc}}(\Omega)$ of
\[
a(x)(1+|Du|^2)^{-\beta/2}\sigma_k((-D^2u)_+)^{\frac{1}{k}}
-q(x)(e^{|Du|^2}-1)\leq f(x).
\]
Put $N=\binom nk^{\frac{1}{k}}$ and set
\[
\begin{split}
W(r)&=N^n n\omega_n\int_0^r t^{n-1}(1+t^2)^{-\beta n/2}e^{-nt^2}\,dt,\\
S&=2^{n-1}\left(\|f^+\|_{L^n(\Omega,\mu)}^n+\|q\|_{L^n(\Omega,\mu)}^n\right),
\end{split}
\]
where $\omega_n=|B_1|$ is the Lebesgue measure of the unit ball in $\mathbb R^n$. If $S<W(\infty)$, then
\[
\sup_\Omega u^+\leq\sup_{\partial\Omega}u^+
+\operatorname{diam}(\Omega)W^{-1}(S).
\]
To verify this, take $p_0=n$, $\gamma_0 \rightarrow 0^+$, $g_1(\xi)=N(1+|\xi|^2)^{-\beta/2}$, $g_2(\xi)=e^{-|\xi|^2}$, and $b=q$. The Newton--Maclaurin inequality and $\mu^{\frac{1}{n}}a=1$ give the determinant condition, while
\[
g_2(\xi)|G(x,z,\xi)|=q(x)(1-e^{-|\xi|^2})\leq q(x).
\]
Since $W(r)=\|g_1g_2\|_{L^n(B_r)}^n$ and $0<W(\infty)<\infty$, the conclusion follows from Theorem \ref{G-ABP-thm}. This example combines the Hessian dependence with variable coefficients and a nonlinear gradient term.
\end{example}

\subsection{ABP Estimates for Hessian quotient equations} In this subsection, we apply Theorem  \ref{G-ABP-thm} to Hessian quotient
equations. Throughout this section, for $0\leq l<k\leq n$, we consider
the operator
\begin{equation}\label{F-kl-def}
F_{k,l}(A)=\left(\frac{\sigma_k(A)}{\sigma_l(A)}\right)^{\frac1{k-l}},
\end{equation}
where $\sigma_j(A)$ denotes the $j$-th elementary symmetric function of the eigenvalues of $A\in S(n)$ defined in Section \ref{k-Hessian}.

\smallskip
We introduce the admissible cone
\[
\mathcal C_k=\left\{ \lambda\in\mathbb R^n:
\sigma_j(\lambda)>0,\quad j=1,\ldots,k \right\}.
\]
The operator $F_{k,l}$ is elliptic in $\mathcal C_k$. For a function $u\in C^2(\Omega)$, we say that $u$ is $k$-admissible (in the concave sense) if
\[
\lambda(-D^2u(x))\in \mathcal C_k \quad \text{for every } x \in \Omega.
\]
For $A\in S(n)$, define the normalized elementary symmetric functions
\[
E_j(A)=\frac{\sigma_j(A)}{\binom nj},
\qquad j=0,\ldots,n .
\]
Note that by convention $\sigma_0(A) =1$, hence
\[
E_0(A) =1.
\]
We first recall the Newton inequalities; see \cite[Theorem B.1.1]{NP}, for example.
\begin{lemma}
\label{Newton inequalities}
Let $A\geq0$. Then
\[
E_j(A)^2\geq E_{j-1}(A)E_{j+1}(A), \qquad j=1,\ldots,n-1 .
\]
\end{lemma}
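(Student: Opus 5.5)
Lemma \ref{Newton inequalities} reduces to a statement about nonnegative real numbers: if $\lambda_1,\ldots,\lambda_n\geq0$ are the eigenvalues of $A$, then $E_j(A)=e_j(\lambda)/\binom nj$, where $e_j$ is the elementary symmetric polynomial. So it suffices to prove $E_j^2\geq E_{j-1}E_{j+1}$ for a real vector $\lambda\in\mathbb R^n$. We will in fact prove it for every $\lambda$ with all entries real, since Newton's inequality needs no sign condition. We use the classical argument via Rolle's theorem.

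First, consider the polynomial
\[
P(t)=\prod_{i=1}^n(t+\lambda_i)=\sum_{j=0}^n\binom nj E_j\,t^{n-j}.
\]
It has only real roots. Differentiating lowers the degree and keeps all roots real, by Rolle's theorem, with multiple roots counted correctly. The reversal $t^{m}Q(1/t)$ of a polynomial $Q$ of degree $m$ with all real roots also has all real roots; zero roots only lower the degree, so one tracks degrees carefully or perturbs $\lambda$ to be distinct and nonzero and argues by continuity. Fix $j\in\{1,\ldots,n-1\}$.

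Differentiate $P$ exactly $n-j-1$ times. This kills the terms with $E_k$ for $k>j+1$, and the resulting polynomial is a constant multiple of
\[
\sum_{k=0}^{j+1}\binom{j+1}{k}E_k\,t^{j+1-k}.
\]
This uses the identity $\binom nk\frac{(n-k)!}{(j+1-k)!}=\frac{n!}{(n-j-1)!}\binom{j+1}{k}\frac{1}{(j+1)!}\cdot\ldots$; we only need that the coefficients become proportional to $\binom{j+1}{k}E_k$, which is a routine factorial check. Next reverse this polynomial and differentiate $j-1$ times, which kills $E_0,\ldots,E_{j-2}$. What remains is a quadratic proportional to
\[
E_{j-1}+2E_j\,s+E_{j+1}s^2 .
\]
It has real roots, or degenerates to a linear or constant polynomial, in which case the inequality is trivial. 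Nonnegativity of the discriminant gives $E_j^2\geq E_{j-1}E_{j+1}$.

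The main obstacle is bookkeeping. We must verify that the binomial normalizations produce exactly the coefficients $1,2,1$. We must also handle degenerate cases where the leading coefficient vanishes, for instance when some $\lambda_i=0$, or when $E_{j+1}=0$, in which case the inequality holds trivially because $E_j^2\geq0$. The cleanest route is to prove the inequality for $\lambda$ with distinct positive entries and then pass to the limit, since both sides are continuous in $\lambda$. Alternatively, one may simply cite \cite[Theorem B.1.1]{NP}, as the paper already does for Lemma \ref{sigma-det}.
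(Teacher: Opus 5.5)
Your proposal is correct, and it takes a different route only in the sense that the paper gives no argument at all: it cites \cite[Theorem B.1.1]{NP}. You supply the classical Rolle's-theorem proof of Newton's inequalities, which makes the lemma self-contained. As you note, it also shows that the hypothesis $A\geq0$ is not needed for this particular inequality.

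Two small corrections to the bookkeeping:
\begin{itemize}
\item The factorial identity you display is garbled. The correct identity is $\binom nk\frac{(n-k)!}{(j+1-k)!}=\frac{n!}{k!\,(j+1-k)!}=\frac{n!}{(j+1)!}\binom{j+1}{k}$, with no factor involving $(n-j-1)!$. After reversal, the $(j-1)$-st derivative of $\sum_k\binom{j+1}{k}E_k s^k$ is exactly $\frac{(j+1)!}{2}\,\bigl(E_{j-1}+2E_js+E_{j+1}s^2\bigr)$, so the coefficients $1,2,1$ come out as you claim.
\item No perturbation or continuity argument is needed. After the first $n-j-1$ differentiations, the polynomial $\prod_{i=1}^{j+1}(t-r_i)$ has leading coefficient $E_0=1$, so its degree is exactly $j+1$ and all $r_i$ are real. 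Its reversal $\prod_i(1-r_is)$ splits into real linear (or constant) factors and has leading coefficient $E_{j+1}$. Either $E_{j+1}=0$, and the inequality is trivial, or the reversal has exact degree $j+1$. In the latter case its $(j-1)$-st derivative is a genuine real-rooted quadratic, and the nonnegativity of its discriminant $4\bigl(E_j^2-E_{j-1}E_{j+1}\bigr)$ is the claim.
\end{itemize}
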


\begin{lemma}\label{log-concavity}
Let $A\in S(n)$ be a positive definite matrix. Then the following assertions are true.
\begin{enumerate}[label=\rm(\roman*)]
\item The sequence $j\longmapsto \log E_j(A)$ is concave, that is,
\[
2\log E_j(A)
\ge
\log E_{j-1}(A)+\log E_{j+1}(A),
\qquad
j=1,\ldots,n-1.
\]
\item Let
\[
\Delta_j
=
\log E_j(A)-\log E_{j-1}(A),
\qquad
j=1,\ldots,n,
\]
then
\[
\Delta_1\ge\Delta_2\ge\cdots\ge\Delta_n.
\]
\item Consequently, for every $0\le l<k\le n$, we have
\[
\frac{\log E_k(A)-\log E_l(A)}{k-l}
\ge
\frac{\log E_n(A)-\log E_l(A)}{n-l}.
\]
\end{enumerate}
\end{lemma}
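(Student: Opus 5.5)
Since $A$ is positive definite, all its eigenvalues $\lambda_1,\ldots,\lambda_n$ are strictly positive. Hence every $\sigma_j(A)>0$, so every $E_j(A)>0$ and all the logarithms in the statement are finite. This is the only place positive definiteness (rather than $A\geq0$) is needed.

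For (i), we apply Lemma \ref{Newton inequalities} to $A$. It gives $E_j(A)^2\geq E_{j-1}(A)E_{j+1}(A)$ for $j=1,\ldots,n-1$. Both sides are positive, and taking logarithms yields the stated concavity inequality directly; the case $j=1$ uses $E_0(A)=1$. For (ii), we rewrite (i) as $\log E_j(A)-\log E_{j-1}(A)\geq \log E_{j+1}(A)-\log E_j(A)$, that is, $\Delta_j\geq\Delta_{j+1}$ for $j=1,\ldots,n-1$. Chaining these gives $\Delta_1\geq\cdots\geq\Delta_n$.

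For (iii), we telescope:
\[
\log E_k(A)-\log E_l(A)=\sum_{j=l+1}^{k}\Delta_j,\qquad \log E_n(A)-\log E_l(A)=\sum_{j=l+1}^{n}\Delta_j .
\]
The claim is then that the average of the non-increasing sequence $(\Delta_j)$ over the initial block $\{l+1,\ldots,k\}$ is at least its average over the longer block $\{l+1,\ldots,n\}$. If $k=n$ there is nothing to prove. Otherwise put $S_1=\sum_{j=l+1}^k\Delta_j$ and $S_2=\sum_{j=k+1}^n\Delta_j$. Monotonicity gives $S_1\geq (k-l)\Delta_k$ and $S_2\leq (n-k)\Delta_{k+1}\leq (n-k)\Delta_k$. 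Therefore $(n-k)S_1\geq (k-l)(n-k)\Delta_k\geq (k-l)S_2$. Adding $(k-l)S_1$ to both sides gives $(n-l)S_1\geq (k-l)(S_1+S_2)$, and dividing by $(k-l)(n-l)>0$ gives the inequality in (iii).

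The argument involves no real obstacle. The only points requiring care are the strict positivity of the $E_j(A)$, so that the logarithms are defined, and the elementary averaging step for monotone sequences in (iii).
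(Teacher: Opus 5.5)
Your proposal is correct and follows essentially the same route as the paper: Newton's inequalities give (i) after taking logarithms, rearranging gives the monotone differences $\Delta_j$ in (ii), and (iii) follows by telescoping and comparing averages of a non-increasing sequence. You additionally spell out the positivity of the $E_j(A)$ and give an explicit proof of the averaging step, which the paper only asserts; both details are correct.
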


\begin{proof} We begin with the proof of (i). By Newton's inequality (Lemma \ref{Newton inequalities}), we have
\[
E_j(A)^2
\ge
E_{j-1}(A)E_{j+1}(A).
\]
It implies
\[
\log\big(E_j(A)^2\big)
\ge
\log\big(E_{j-1}(A)E_{j+1}(A)\big).
\]
Then, we obtain
\[
2\log E_j(A)
\ge
\log E_{j-1}(A)+\log E_{j+1}(A),
\]
which proves (i).

\smallskip 
We prove (ii). Rearranging the inequality in (i), we obtain
\[
\log E_j(A)-\log E_{j-1}(A)
\ge
\log E_{j+1}(A)-\log E_j(A).
\]
By the definition of $\Delta_j$, we see that
\[
\Delta_j\ge\Delta_{j+1},
\qquad
j=1,\ldots,n-1.
\]
Hence,
\[
\Delta_1\ge\Delta_2\ge\cdots\ge\Delta_n,
\]
and (ii) follows.

\smallskip It remains to prove (iii). Observe that for $0 \leq l < k \leq n$,
\[
\log E_k(A)-\log E_l(A)
=
\sum_{i=l+1}^{k}\Delta_i.
\]
Therefore,
\[
\frac{\log E_k(A)-\log E_l(A)}{k-l}
=
\frac{\Delta_{l+1}+\cdots+\Delta_k}{k-l},
\]
which is the arithmetic mean of the numbers
\[
\Delta_{l+1},\ldots,\Delta_k.
\]

Similarly,
\[
\frac{\log E_n(A)-\log E_l(A)}{n-l}
=
\frac{\Delta_{l+1}+\cdots+\Delta_n}{n-l},
\]
which is the arithmetic mean of the larger collection
\[
\Delta_{l+1},\ldots,\Delta_n.
\]

Since
\[
\Delta_{l+1}\ge\Delta_{l+2}\ge\cdots\ge\Delta_n,
\]
the average of the first $k-l$ terms is greater than or equal to the average of all $n-l$ terms. Hence,
\[
\frac{\Delta_{l+1}+\cdots+\Delta_k}{k-l}
\ge
\frac{\Delta_{l+1}+\cdots+\Delta_n}{n-l},
\]
that is,
\[
\frac{\log E_k(A)-\log E_l(A)}{k-l}
\ge
\frac{\log E_n(A)-\log E_l(A)}{n-l}.
\]
This completes the proof.
\end{proof}
The following lemma gives a determinant bound for Hessian quotients, which is needed for the ABP estimate for Hessian quotient equations.

\begin{lemma}\label{quotient NM}
Let $A\geq0$, $0\leq l<k\leq n$, and let $C_{n,k,l}=\left(\binom nk/\binom nl\right)^{\frac{1}{k-l}}$. If $l=0$, then
\[
\sigma_k(A)^{\frac{1}{k}}\geq C_{n,k,0}\det(A)^\frac{1}{n}.
\]
If $l>0$ and $A>0$ satisfies $\lambda_{\max}(A)\leq\kappa\lambda_{\min}(A)$ for some $\kappa\geq1$, where $\lambda_{\max}(A),\lambda_{\min}(A)$ respectively are the maximum and minimum eigenvalues of $A$, then
\[
\left(\frac{\sigma_k(A)}{\sigma_l(A)}\right)^{\frac{1}{k-l}}
\geq C_{n,k,l}\kappa^{-\frac{l}{k-l}-\frac{n-1}{n}}\det(A)^{\frac{1}{n}}.
\]
\end{lemma}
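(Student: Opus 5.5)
The case $l=0$ needs nothing new. Since $\sigma_0\equiv1$ and $C_{n,k,0}=\binom nk^{\frac1k}$, the claimed inequality is exactly the Newton--Maclaurin bound \eqref{sigma-det-ineq} of Lemma \ref{sigma-det}, applied to $A\geq0$. I would simply cite that lemma.

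For $l>0$ I would not use the log-concavity chain of Lemma \ref{log-concavity}. I would instead compare everything with the smallest eigenvalue $m=\lambda_{\min}(A)>0$, writing $M=\lambda_{\max}(A)\leq\kappa m$. The argument has three steps, each an elementary eigenvalue estimate.

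First, bound the numerator from below. Every one of the $\binom nk$ products in $\sigma_k(A)$ is at least $m^k$, so $\sigma_k(A)\geq\binom nk m^k$.

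Second, bound the denominator from above. Every one of the $\binom nl$ products in $\sigma_l(A)$ is at most $M^l\leq\kappa^l m^l$, so $\sigma_l(A)\leq\binom nl\kappa^l m^l$. Dividing the two bounds and taking the power $\frac1{k-l}$ gives
\[
\left(\frac{\sigma_k(A)}{\sigma_l(A)}\right)^{\frac1{k-l}}\geq C_{n,k,l}\,\kappa^{-\frac{l}{k-l}}\,m .
\]

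Third, relate $m$ to the determinant. Write $\det A$ as the product of one factor $m$ and the remaining $n-1$ eigenvalues, each at most $M\leq\kappa m$. This gives $\det A\leq m\,(\kappa m)^{n-1}=\kappa^{n-1}m^n$, hence
\[
m\geq\kappa^{-\frac{n-1}{n}}\det(A)^{\frac1n}.
\]
Substituting this into the previous display yields precisely the constant $C_{n,k,l}\kappa^{-\frac{l}{k-l}-\frac{n-1}{n}}$.

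There is no genuine obstacle; the only point needing care is the third step. The crude bound $\det(A)^{\frac1n}\leq M\leq\kappa m$ would only produce the exponent $\kappa^{-1}$. To obtain the sharper exponent $\kappa^{-\frac{n-1}{n}}$ stated in the lemma, one must keep one factor equal to $m$ when estimating the determinant.
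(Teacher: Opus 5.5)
Your proposal is correct and matches the paper's proof: the case $l=0$ is reduced to Lemma~\ref{sigma-det}, and for $l>0$ the paper uses the same three eigenvalue bounds, $\sigma_k(A)\geq\binom nk a^k$, $\sigma_l(A)\leq\binom nl(\kappa a)^l$ and $\det(A)\leq\kappa^{n-1}a^n$ with $a=\lambda_{\min}(A)$. You spell out the final combination, and why one factor $a$ must be kept to get the exponent $\kappa^{-\frac{n-1}{n}}$, more explicitly than the paper does.
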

\begin{proof}
The case $l=0$ is Lemma \ref{sigma-det}. For $l>0$, put $a=\lambda_{\min}(A)>0$. The eigenvalue bounds give
\[
\sigma_k(A)\geq\binom nk a^k,\qquad
\sigma_l(A)\leq\binom nl(\kappa a)^l.
\]
Also $\det(A)\leq\kappa^{n-1}a^n$. Combining these inequalities proves the assertion.
\end{proof}
\begin{remark}
A determinant lower bound without any further restriction is false in general when $l>0$. For example, for $n=2$, $k=2$, $l=1$ and $A=\operatorname{diag}(t,1)$, the quotient $\det(A)/\operatorname{tr}(A)$ remains bounded as $t\to\infty$, whereas $\det(A)^{1/2}\to\infty$.
\end{remark}

We now give the viscosity formulation for the Hessian quotient equations. We consider the equation
\begin{equation}\label{Hessian quotient equation}
F_{k,l}(-D^2u)
+
G(x,u,Du)
=
0 \quad \text{in} \quad \Omega
\end{equation}
where $F_{k,l}$ is defined in \eqref{F-kl-def}.
\begin{remark}
In our main result for this section, we assume that $|G(x,z,\xi)|\leq b(x)$ for some $b\in L^n$, so for $f\in L^n$, the class of problems posed above is equivalent to 
\[
F_{k,l}(-D^2u)
+
\tilde G(x,u,Du)
=
f(x).
\]
by the substitution $\tilde G = G+f$.
\end{remark}
A test function $\phi\in C^2(\Omega)$ touching $u$ from above at
$x_0$ is called $k$-admissible if
\[
\lambda(-D^2\phi(x_0))\in\mathcal C_k .
\]

For this subsection, a function $u\in USC(\Omega)$ is called a viscosity sub-solution on $k$-admissible upper tests
of \eqref{Hessian quotient equation} if, whenever $\phi$ is a
$k$-admissible test function touching $u$ from above at $x_0$, one has
\[
\left(
\frac{\sigma_k(-D^2\phi(x_0))}
{\sigma_l(-D^2\phi(x_0))}
\right)^{\frac1{k-l}}
+
G(x_0,u(x_0),D\phi(x_0))
\leq 0.
\]

\smallskip
Now, we state and prove the ABP estimate for Hessian quotient equations.

\begin{theorem}\label{ABP Hessian quotient}
Let $\Omega\subset\mathbb R^n$ be bounded and let $0\leq l<k\leq n$. Assume that $G$ is continuous and $|G(x,z,\xi)|\leq b(x)$ for some nonnegative $b\in L^n(\Omega)$. Let $u\in C(\overline\Omega)\cap C^{1,1}_{\mathrm{loc}}(\Omega)$ be a viscosity sub-solution on $k$-admissible upper tests of \eqref{Hessian quotient equation}. If $l>0$, suppose in addition that, for almost every point $x$ of the upper contact sets of $u$ at which $-D^2u(x)>0$,
\[
\lambda_{\max}(-D^2u(x))\leq\kappa\lambda_{\min}(-D^2u(x))
\]
with a constant $\kappa\geq1$ independent of the contact set. No such condition is needed for $l=0$. Then
\[
\sup_\Omega u^+\leq\sup_{\partial\Omega}u^+
+\frac{2^{(n-1)/n}\operatorname{diam}(\Omega)}{c_{n,k,l}\omega_n^{\frac{1}{n}}}
\|b\|_{L^n(\Omega)},
\]
where $c_{n,k,0}=\binom nk^{\frac{1}{k}}$, while $c_{n,k,l}=C_{n,k,l}\kappa^{-\frac{l}{k-l}-\frac{n-1}{n}}$ when $l>0$, and $\omega_n = |B_1|$ is the Lebesgue measure of the unit ball in $\mathbb R^n$.
\end{theorem}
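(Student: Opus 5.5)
I will rerun the contact-set argument from the proof of Theorem \ref{G-ABP-thm} directly, with $p_0=n$, $\mu\equiv1$, $\gamma_0=0$, $g_1\equiv c_{n,k,l}$ and $g_2\equiv1$. I do not invoke that theorem as a black box, for two reasons. First, the sub-solution notion here only tests $k$-admissible upper tests. Second, for $l>0$ the determinant bound of Lemma \ref{quotient NM} holds only under the eigenvalue-ratio condition, which is assumed only on the contact set.

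Set $c=\sup_{\partial\Omega}u^+$, $v=u-c$ and $M=(\sup_\Omega u^+-c)/\operatorname{diam}(\Omega)$, and assume $M>0$. For $r<M$, exactly as in the proof of Theorem \ref{G-ABP-thm}, the contact set $C_r$ lies in a compact subset of $\Omega$. Moreover $B_r\subset Du(C_r)$, and $D^2u\le0$ wherever $D^2u$ exists on $C_r$. Lemma \ref{area lemma} with $h\equiv1$ then gives
\[
\omega_n r^n\le\int_{C_r}\det(-D^2u)\,dx=\int_{C_r\cap\{\det(-D^2u)>0\}}\det(-D^2u)\,dx .
\]

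On the set $C_r\cap\{\det(-D^2u)>0\}$ we have $-D^2u>0$, so $\lambda(-D^2u)$ lies in the positive cone, which is contained in $\mathcal C_k$. Take a point $x$ of this set where $Du$ is differentiable with the second-order expansion from the proof of Theorem \ref{G-ABP-thm}. The polynomial $P_\varepsilon$ with Hessian $D^2u(x)+2\varepsilon I$ touches $u$ from above near $x$. For $\varepsilon$ small this Hessian is still negative definite, so $P_\varepsilon$ is a $k$-admissible upper test. The viscosity inequality gives $F_{k,l}(-D^2u(x)-2\varepsilon I)+G(x,u(x),Du(x))\le0$. Letting $\varepsilon\to0$, using continuity of $F_{k,l}$ on the positive definite matrices, yields
\[
F_{k,l}(-D^2u(x))\le -G\le b(x).
\]

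Next I apply Lemma \ref{quotient NM}. For $l=0$ it applies directly. For $l>0$ it applies using the assumed bound $\lambda_{\max}\le\kappa\lambda_{\min}$ at a.e. such contact point. In both cases $c_{n,k,l}\det(-D^2u)^{1/n}\le F_{k,l}(-D^2u)\le b$, hence $\det(-D^2u)\le c_{n,k,l}^{-n}b^n$ a.e. on the set. Inserting this gives
\[
\omega_n r^n\le c_{n,k,l}^{-n}\|b\|_{L^n(\Omega)}^n .
\]
Letting $r\uparrow M$ yields $M\le \|b\|_{L^n}/(c_{n,k,l}\omega_n^{1/n})$, which is the claim. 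The factor $2^{(n-1)/n}$ in the statement only weakens the bound; it comes from the general $\gamma$ in \eqref{gamma-def} with $f=0$.

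The main obstacle is the admissibility of the test polynomial. Only at points where $-D^2u$ is positive definite is the perturbed quadratic $k$-admissible and $F_{k,l}$ continuous there. This is why I discard the degenerate points, which is legitimate because they contribute nothing to the area integral. It is also exactly where the hypothesis on $\kappa$, stated only at such points, is used.
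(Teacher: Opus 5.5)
Your proposal is correct and follows the paper's proof essentially step for step: the compact contact sets $C_r$ from the proof of Theorem~\ref{G-ABP-thm}, discarding the points where $\det(-D^2u)=0$, testing with the $\varepsilon$-perturbed Taylor polynomial where $-D^2u>0$, then Lemma~\ref{quotient NM} and the area formula. Your remark that the factor $2^{(n-1)/n}$ is superfluous agrees with the paper, which derives $|B_r|\le c_{n,k,l}^{-n}\int_{C_r}b^n\,dx$ and inserts $2^{n-1}$ only in a final, trivially valid inequality.
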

\begin{proof}
If $M(u)=0$, the conclusion is immediate, where $M(u)$ is defined in \eqref{def-M}. Otherwise, use the compact contact sets $C_r$, $0<r<M(u)$, in the proof of Theorem \ref{G-ABP-thm}. At almost every point of these sets, $-D^2u\geq0$. Points at which $\det(-D^2u)=0$ contribute nothing to the area integral. At each remaining point $-D^2u>0$, and the Taylor polynomial of $u$ with $\varepsilon|y-x|^2$ added is an admissible test function for sufficiently small $\varepsilon>0$. Letting $\varepsilon\downarrow0$ gives
\[
\left(\frac{\sigma_k(-D^2u)}{\sigma_l(-D^2u)}\right)^{\frac{1}{k-l}}
\leq b(x).
\]
By Lemma \ref{quotient NM}, $c_{n,k,l}\det(-D^2u)^{\frac{1}{n}}\leq b$ on the contact set at almost every point with positive determinant. Applying the area formula to the locally Lipschitz map $Du$ gives
\[
|B_r|\leq c_{n,k,l}^{-n}\int_{C_r}b^n\,dx
\leq\frac{2^{n-1}}{c_{n,k,l}^n}
\|b\|_{L^n(\Omega)}^n.
\]
Letting $r$ increase to the normalized height $M(u)$ proves the estimate.
\end{proof}

\noindent
For $l>0$, the estimate above uses information on the contact Hessian in addition to admissibility. The pinching condition supplies the determinant lower bound in Lemma \ref{quotient NM}; it is not a consequence of admissibility alone.

\subsection{ABP Estimates for Generalized Mean Curvature Equations}
Let $A:\Omega\to\mathbb R^{n\times n}$ be a symmetric positive definite matrix function and let $B:\Omega\to\mathbb R^n$. To preserve ellipticity for anisotropic $A$, we use the symmetric coefficient matrix
\begin{equation}\label{curvature-matrix}
\mathcal A(x,\xi)=A(x)^{1/2}
\left[\frac{I_n}{\sqrt{1+|\xi|^2}}-
\frac{\xi\otimes\xi}{(1+|\xi|^2)^{3/2}}\right]A(x)^{1/2}, \quad x \in \Omega, \ \xi \in \R^n.
\end{equation}
Here $A^{1/2}$ is the symmetric positive definite square root of $A$. We consider
\begin{equation}\label{pmc eqn_}
-\operatorname{Tr}\bigl(\mathcal A(x,Du)D^2u\bigr)
+\frac{\langle B(x),Du\rangle}{\sqrt{1+|Du|^2}}
=f(x)\quad\text{in }\Omega.
\end{equation}
The normalized drift is bounded by $|B(x)|$. The matrix in square brackets is the Hessian of $\xi\mapsto\sqrt{1+|\xi|^2}$; below it is denoted by $J(\xi)$. When $A=I_n$ and $B=0$, \eqref{pmc eqn_} is the non-divergence form of the prescribed mean curvature equation
\begin{equation}\label{pmc eqn div}
-\operatorname{div}\left(\frac{Du}{\sqrt{1+|Du|^2}}\right)=f(x) \quad \text{for} \quad x \ \in \ \Omega.
\end{equation}
Thus \eqref{pmc eqn_} provides an elliptic anisotropic extension of this model. Related curvature equations are discussed in \cite{CC,BI,Dirr,Giga}.

\begin{remark}\label{matrix-product-obstruction}
For general positive definite $A$, the unsymmetrized product $AJ(\xi)$ in the original formulation need not define an elliptic operator, where
\[
J(\xi) = \frac{I_n}{\sqrt{1+|\xi|^2}}-
\frac{\xi\otimes\xi}{(1+|\xi|^2)^{3/2}}, \quad \xi \ \in \ \R^n.
\]
 Indeed, in dimension two, take
\[
A=\begin{pmatrix}1&0\\0&4\end{pmatrix},\qquad \xi=(3,3).
\]
Then
\[
\frac{AJ(\xi)+J(\xi)A}{2}
=\frac1{19^{3/2}}\begin{pmatrix}10&-45/2\\-45/2&40\end{pmatrix},
\]
whose determinant equals $-425/(4\cdot19^3)<0$.
Consequently, positivity of $A$ and $J$ alone does not justify a trace--determinant inequality for $AJ(-X)$ with $X\leq0$. The coefficient in \eqref{curvature-matrix} is symmetric positive definite and satisfies $\det\mathcal A=\det A\det J$. If $A$ and $J(\xi)$ commute, then $A^{1/2}J(\xi)A^{1/2}=AJ(\xi)$; in particular, the original scalar case $A=a(x)I_n$ is unchanged.
\end{remark}
We place the following assumptions on $A,B$ in \eqref{pmc eqn_}.
\begin{assumption}\label{A-assumption_}
Assume that the following conditions hold. 
\begin{enumerate}[label=\normalfont(\arabic*)]
    \item\label{A-assumption_1}
    \(A(x)\) is symmetric and non-negative definite for all \(x\in\Omega\), and $A, B \in C(\overline{\Omega})$.

    \item\label{A-assumption_2} There exists a locally integrable function
\(\mu:\mathbb{R}^n\to(0,\infty)\) such that

    \(1\le \mu(x)\det(A(x))\) for all \(x\in\Omega\).

\end{enumerate}
\end{assumption}
\noindent
We now prove a sup-norm estimate for viscosity sub-solutions of \eqref{pmc eqn_}. The smallness condition below reflects the finite mass of the gradient weight associated with the mean curvature operator; related estimates for divergence form equations are proved in \cite{ABP_divform}.
\begin{theorem}\label{PMC_main_result_}
Let $\Omega\subseteq \mathbb R^n$ be an open and bounded domain with boundary $\partial \Omega$, and let $A:\Omega\to \mathbb R^{n\times n}$ and $B:\Omega\to \mathbb R^{n}$ in \eqref{pmc eqn_} satisfy Assumption \ref{A-assumption_}. Suppose that $f\in C(\Omega)$ with $f^+\in L^n(\Omega,\mu)$ and $B\in L^n(\Omega,\mu)$, where $\mu$ is given in Assumption \ref{A-assumption_}, and assume the smallness condition:
\begin{equation}\label{small f}
\|f^+\|_{L^n(\Omega,\mu)} < \left(e^{K/\omega_n}-1\right)^{-1/n} \quad \text{with}\quad K=2^{n-1}\left(1+\|B\|_{L^n(\Omega,\mu)}^n\right),
\end{equation}
where $\omega_n$ denotes the Lebesgue measure of $B_1(0)\subset \R^n$.
Then, for any viscosity sub-solution $u\in C(\overline\Omega)\cap C^{1,1}_{\mathrm{loc}}(\Omega)$ to the equation \eqref{pmc eqn_}, we have
\begin{equation*}
\sup_\Omega u^+\le \sup_{\partial\Omega}u^+ +\textup{diam}(\Omega)\frac{\|f^+\|_{L^n(\Omega,\mu)}\left(e^{K/\omega_n}-1\right)^{\frac{1}{n}}}{\sqrt{1-\|f^+\|_{L^n(\Omega,\mu)}^2\left(e^{K/\omega_n}-1\right)^{2/n}}}.
\end{equation*}
\end{theorem}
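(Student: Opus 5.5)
The plan is to deduce the estimate from Theorem \ref{G-ABP-thm} with $p_0=n$ and $\gamma_0=0$. The gradient weights are chosen so that $\|g_1g_2\|_{L^n(B_r)}^n$ becomes an explicit logarithm after the change of variables $\eta=\xi/\sqrt{1+|\xi|^2}$.

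\textbf{Setting up the operators.} Put $F(x,z,\xi,X)=-\operatorname{Tr}(\mathcal A(x,\xi)X)$ and $G(x,z,\xi)=\langle B(x),\xi\rangle/\sqrt{1+|\xi|^2}$. Both are continuous, since $A,B\in C(\overline\Omega)$ and $A^{1/2}$ depends continuously on $A$. A direct computation gives
\[
\det J(\xi)=(1+|\xi|^2)^{-n/2}\Big(1-\frac{|\xi|^2}{1+|\xi|^2}\Big)=(1+|\xi|^2)^{-\frac{n+2}{2}},
\]
and hence $\det\mathcal A=\det A\,(1+|\xi|^2)^{-(n+2)/2}$. For $X\leq0$, the trace--determinant (AM--GM) inequality gives $-\operatorname{Tr}(\mathcal A X)\geq n(\det\mathcal A)^{1/n}\det(-X)^{1/n}$. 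Combined with $1\leq\mu\det A$ from Assumption \ref{A-assumption_}, this shows that Assumption \ref{F-assumption}-\ref{F-assumption-2} holds with
\[
g_1(\xi)=(1+|\xi|^2)^{-\frac{n+2}{2n}}.
\]
Here I deliberately drop the factor $n$, which is harmless because a smaller $g_1$ only weakens the hypothesis to be checked.

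\textbf{Choice of $g_2$.} Let $a=\|f^+\|_{L^n(\Omega,\mu)}$, first assuming $a>0$, and write $\rho(\xi)=|\xi|/\sqrt{1+|\xi|^2}\in[0,1)$. Since $|G|\leq|B|\rho$, I take
\[
g_2(\xi)=(a^n+\rho(\xi)^n)^{-1/n}.
\]
This function is positive and bounded, with $\|g_2\|_\infty=1/a$, and it satisfies $g_2|G|\leq|B|=:b$. Therefore \eqref{gamma-def} gives $\gamma^n=2^{n-1}(1+\|B\|^n_{L^n(\Omega,\mu)})=K$.

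\textbf{Computing $H$.} The map $\xi\mapsto\eta=\xi/\sqrt{1+|\xi|^2}$ is a diffeomorphism of $B_r$ onto $B_{\rho(r)}$, with Jacobian exactly $(1+|\xi|^2)^{-(n+2)/2}$. In polar coordinates,
\[
\|g_1g_2\|^n_{L^n(B_r)}=\int_{B_{\rho(r)}}\frac{d\eta}{a^n+|\eta|^n}=\omega_n\log\Big(1+\frac{\rho(r)^n}{a^n}\Big),
\qquad \rho(r)=\frac{r}{\sqrt{1+r^2}}.
\]
In particular $\|g_1g_2\|_{L^n(\mathbb R^n)}^n=\omega_n\log(1+a^{-n})$. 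The smallness condition \eqref{small f} says precisely that $a(e^{K/\omega_n}-1)^{1/n}<1$, which is equivalent to $K<\omega_n\log(1+a^{-n})$, i.e.\ to $\gamma<\|g_1g_2\|_{L^n}$. Solving $\omega_n\log(1+\rho^n/a^n)=K$ gives $\rho=ac$ with $c=(e^{K/\omega_n}-1)^{1/n}$, and therefore
\[
H(\gamma)=\frac{ac}{\sqrt{1-a^2c^2}}.
\]
Theorem \ref{G-ABP-thm} with $\gamma_0=0$ then yields the stated bound.

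\textbf{The case $a=0$ and the main obstacle.} If $a=0$, I replace $f$ by $f+\varepsilon$; then $u$ is still a sub-solution, and $\|f^++\varepsilon\|_{L^n(\Omega,\mu)}\to0$ because $\mu(\Omega)<\infty$ by local integrability. The smallness condition persists for small $\varepsilon$, and letting $\varepsilon\to0$ gives the bound, whose right-hand side is continuous at $a=0$. The main obstacle I expect is finding the weight $g_2$: the bound on $G$ and the normalization $\|g_2\|_\infty=1/a$ must both hold, while the integral $\|g_1g_2\|_{L^n(B_r)}$ must reduce to the closed logarithmic form. Once one notices that $(1+|\xi|^2)^{-(n+2)/2}$ is the Jacobian of $\eta$, the rest follows as in Remark \ref{critical-MA}.
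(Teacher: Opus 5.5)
Your proposal is correct and follows the paper's proof essentially step for step: the same weights $g_1=(\det J)^{1/n}$ and $g_2=(a^n+\rho^n)^{-1/n}$, $b=|B|$, $\gamma^n=K$, the same logarithmic formula for $\|g_1g_2\|_{L^n(B_r)}^n$ (your substitution $\eta=\xi/\sqrt{1+|\xi|^2}$, whose Jacobian is $\det J(\xi)$, is just the $n$-dimensional form of the paper's radial substitution $t=|\xi|^n(1+|\xi|^2)^{-n/2}$), and the same $\varepsilon$-perturbation when $a=0$. One small fix in that last step: perturb to $f^++\varepsilon$, as your norm computation already indicates, rather than $f+\varepsilon$, because $(f+\varepsilon)^+$ can vanish identically (e.g.\ $f\equiv-1$), which would leave you in the case $a=0$.
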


\begin{proof}
The main idea of the proof is to apply Theorem \ref{G-ABP-thm} to the equation \eqref{pmc eqn_}. 
First assume $a=\|f^+\|_{L^n(\Omega,\mu)}>0$. The case $a=0$ is treated at the end of the proof. To begin with, we clarify the choices of functions and parameters in Assumption \ref{F-assumption} for this proof. In particular, we take $\mu$ being the one given in Assumption \ref{A-assumption_}, $\gamma_0 = 0,\  p_0 = n$, and let $g_1,g_2:\mathbb R^n\to\mathbb R$ be defined as
\[
g_1(\xi) = [\textsf{det}(J(\xi))]^\frac{1}{n},
\]
with $J:\mathbb R^n\to\mathbb R^{n\times n}$ being defined as
\begin{equation}\label{def J}
J( \xi) =\frac{I_n}{(1+|\xi|^2)^{1/2}}-\frac{\xi\otimes \xi}{(1+|\xi|^2)^{3/2}}, \quad \xi \ \in \ \R^n
\end{equation}
and
\begin{equation}\label{g2}
g_2 (\xi)= \left(\frac{|\xi|^n}{(|\xi|^2+1)^\frac{n}{2}}+\|f^+\|_{L^n(\Omega,\mu)}^n\right)^{-1/n}, \quad \xi \in \mathbb{R}^n.
\end{equation}
Here, the boundedness of $g_1$ and $g_2$ follows from equation \eqref{def J} and the assumption $0<\|f^+\|_{L^n(\Omega,\mu)}$, respectively. We take the following $F$ and $G$, which are independent of the $z$-variable, for equation \eqref{pmc eqn_}:
\begin{align*}
&\begin{array}{ccl}
F: & \Omega\times\mathbb{R}^n\times S(n)&\longrightarrow\mathbb R\\
&(x,\xi,X) &\longmapsto -\textup{Tr}\big(\mathcal A(x,\xi)X\big),
\end{array}\\
&\begin{array}{ccl}
G: & \Omega\times\mathbb{R}^n&\longrightarrow\mathbb R\\
&(x,\xi) &\longmapsto \frac{\langle B(x),\xi\rangle}{(1+|\xi|^2)^{1/2}}.
\end{array}
\end{align*}
Based on these choices and the Assumption~\ref{A-assumption_}, we start to show equation \eqref{pmc eqn_} (i.e., the functions $F$ and $G$ defined above) satisfies \textup{Assumption \ref{F-assumption}}.

\smallskip
The operators $F$ and $G$ are continuous. Let $\phi\in C^2(\Omega)$ and $x\in\Omega$ with $Y=-D^2\phi(x)\geq0$. By Lemma \ref{J-eigenvalues},
\begin{equation}\label{det J}
\det J(\xi)=(1+|\xi|^2)^{-(n+2)/2}.
\end{equation}
Put $P=\mathcal A(x,D\phi(x))$. The matrix $P^{1/2}YP^{1/2}$ is symmetric and non-negative definite. The arithmetic--geometric mean inequality for its eigenvalues gives
\[
\operatorname{Tr}(PY)=\operatorname{Tr}(P^{1/2}YP^{1/2})
\geq n\det(P)^{\frac{1}{n}}\det(Y)^{\frac{1}{n}}.
\]
Since $\det P=\det A(x)\det J(D\phi(x))$ and $\mu(x)\det A(x)\geq1$, we obtain
\[
\mu(x)^{\frac{1}{n}}F(x,D\phi(x),D^2\phi(x))
\geq n\det J(D\phi(x))^{\frac{1}{n}}\det(-D^2\phi(x))^{\frac{1}{n}}.
\]
This proves Assumption \ref{F-assumption}-\ref{F-assumption-2} for the chosen $g_1$ (and in fact for $ng_1$).
Additionally, we observe for $(x,\xi)\in\Omega\times (\mathbb R^n\setminus\{0\})$,
\begin{align*}
g_2(\xi)\left|G(x,\xi)\right|
&=\left(\frac{|\xi|^n}{(|\xi|^2+1)^\frac{n}{2}}+\|f^+\|_{L^n(\Omega,\mu)}^n\right)^{-\frac{1}{n}}\left|\frac{\langle B(x),\xi\rangle}{\sqrt{1+|\xi|^2}}\right|\\
&\le \frac{\sqrt{|\xi|^2+1}}{|\xi|}\left(\frac{|\xi||B(x)|}{\sqrt{1+|\xi|^2}}\right) = |B(x)|.
\end{align*}
At $\xi=0$, the same inequality holds since $G(x,0)=0$. Thus, \textup{Assumption \ref{F-assumption}}-\ref{F-assumption-3} is also satisfied by choosing $b(x)=|B(x)|$ and from \textup{Assumption \ref{A-assumption_}}-\ref{A-assumption_1}, and we have verified that \eqref{pmc eqn_} satisfies \textup{Assumption \ref{F-assumption}}.

\smallskip
We next evaluate the value of $\gamma$ defined in \eqref{gamma-def}. Recall that we choose $p_0=n$, $\gamma_0=0$, and $b(x)=|B(x)|$ in this proof. Using the choice of $g_2$, we see that
\[
\|g_2\|_{L^\infty(\mathbb{R}^n)} \|f^+\|_{L^{n}(\Omega, \mu)}=1.
\]
Thus, we have
\begin{equation}\label{gamma K}
\gamma=2^{\frac{n-1}{n}}\left(1+\|B\|^n_{L^n(\Omega, \mu)}\right)^{\frac{1}{n}}=K^{\frac{1}{n}}.
\end{equation}
On the other hand, using \eqref{det J} and by a direct computation, for any $r>0$, we have
\begin{equation}\label{g1g2 norm}
\begin{aligned}
&\|g_1g_2\|_{L^n(B_r(0))}^n = \int_{B_r(0)}\frac{\textsf{det}(J(\xi))}{\frac{|\xi|^n}{(1+|\xi|^2)^\frac{n}{2}}+\|f^+\|_{L^n(\Omega,\mu)}^n}\text{d}\xi \\
&= \int_{B_r(0)}\frac{1}{\left((1+|\xi|^2)|\xi|^n+(1+|\xi|^2)^{1+\frac{n}{2}}\|f^+\|_{L^n(\Omega,\mu)}^n\right)}\text{d}\xi \\
&= \omega_n\log\left(\left(\frac{r}{\sqrt{r^2+1}}\right)^n+\|f^+\|_{L^n(\Omega,\mu)}^n\right)-\omega_n\log(\|f^+\|_{L^n(\Omega,\mu)}^n)\\
&= \omega_n\log\left(1+\left(\tfrac{r}{\sqrt{r^2+1}}\right)^n\|f^+\|_{L^n(\Omega,\mu)}^{-n}\right),
\end{aligned}
\end{equation}
For the radial integration, use $t=\rho^n(1+\rho^2)^{-n/2}$, so $dt=n\rho^{n-1}(1+\rho^2)^{-(n+2)/2}d\rho$. Here $\omega_n=|B_1(0)|$. Letting $r\rightarrow \infty$, it follows that
\[
\|g_1g_2\|_{L^n(\R^n)}=\left[\omega_n\log \left(1+\|f^+\|_{L^n(\Omega, \mu)}^{-n}\right)\right]^{\frac{1}{n}}.
\]
Using the smallness condition \eqref{small f} on $\|f^+\|_{L^n(\Omega, \mu)}$, it is not hard to check that 
\[
\gamma< \|g_1g_2\|_{L^n(\R^n)}.
\]
Thus, we can apply Theorem \ref{G-ABP-thm}.

\smallskip
We define the function $H:\left(0, \|g_1g_2\|_{L^n(\mathbb R^n)}\right)\to(0,\infty)$ following the way in \eqref{H-def}. Explicitly, from \eqref{g1g2 norm}, we have
\[
H(s)=\frac{\|f^+\|_{L^n(\Omega,\mu)}\left(e^{s^n/\omega_n}-1\right)^{\frac{1}{n}}}{\sqrt{1-\|f^+\|_{L^n(\Omega,\mu)}^2\left(e^{s^n/\omega_n}-1\right)^{2/n}}}.
\]
Then, recall that $\gamma_0=0$ in this proof, it follows from Theorem \ref{G-ABP-thm} and \eqref{gamma K} that 
\begin{equation*}
\sup_\Omega u^+\le \sup_{\partial\Omega}u^+ +\textup{diam}(\Omega)\frac{\|f^+\|_{L^n(\Omega,\mu)}\left(e^{K/\omega_n}-1\right)^{\frac{1}{n}}}{\sqrt{1-\|f^+\|_{L^n(\Omega,\mu)}^2\left(e^{K/\omega_n}-1\right)^{2/n}}},
\end{equation*}
where $K$ is defined in \eqref{small f}. If $a=0$, apply the proved estimate to the continuous source $f_\varepsilon=f^++\varepsilon$. The function $u$ is still a sub-solution since $f\leq f_\varepsilon$. The measure $\mu(\Omega)$ is finite because $\Omega$ is bounded and $\mu\in L^1_{\mathrm{loc}}(\mathbb R^n)$. Hence $\|f_\varepsilon\|_{L^n(\Omega,\mu)}\to0$. For sufficiently small $\varepsilon$, the strict smallness condition holds, and the right-hand side of the height estimate tends to zero. This proves the stated conclusion also when $a=0$.
\end{proof}

The following remark shows that the estimate in Theorem \ref{PMC_main_result_} can be improved when $B=0$.

\begin{remark}
When $B=0$, the same contact-set argument gives a sharper estimate without the factor $2^{n-1}$. Put
\[
\theta=\frac{\|f^+\|_{L^n(\Omega,\mu)}}{n\omega_n^{\frac{1}{n}}}.
\]
If $\theta<1$, then
\[
\sup_\Omega u^+\leq\sup_{\partial\Omega}u^+
+\operatorname{diam}(\Omega)\frac{\theta}{\sqrt{1-\theta^2}}.
\]
Indeed, the matrix inequality in the proof gives $n^n\det J(Du)\det(-D^2u)\leq\mu(f^+)^n$ on the contact set. Since
\[
\int_{B_r}\det J(\xi)\,d\xi
=\omega_n\frac{r^n}{(1+r^2)^{n/2}},
\]
the area formula yields $r/\sqrt{1+r^2}\leq\theta$ for every $r<M(u)$. Inverting this inequality proves the claim, including the zero-source case. For $A=I$ and $\mu=1$, this recovers the mean-curvature estimate in \cite{ABP_divform}, with the norm over $\Omega$ in place of the contact-set norm used there; see also \cite[Corollary 10.6]{GT}. The formulation here includes a variable anisotropic coefficient, while the viscosity result in \cite{ABP_divform} does not require locally Lipschitz gradients.
\end{remark}

The following lemma we used in the proof of Theorem \ref{PMC_main_result_} is from a direct matrix computation, but we include it here for completeness.

\begin{lemma}\label{J-eigenvalues}
Let $J:\mathbb R^n\to\mathbb R^{n\times n}$ be defined as in \eqref{def J}. Then $J(\xi)$ has eigenvalues $(1+|\xi|^2)^{-1/2}$ with geometric multiplicity $n-1$ and $(1+|\xi|^2)^{-3/2}$ with multiplicity $1$ when $\xi\ne0$. At $\xi=0$, $J(0)=I_n$, so the eigenvalue $1$ has multiplicity $n$. 
\end{lemma}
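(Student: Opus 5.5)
The plan is to exhibit an explicit eigenbasis of $J(\xi)$, since $J(\xi)$ is a scalar multiple of the identity plus a rank-one perturbation along $\xi$. First I will handle $\xi=0$ separately. Substituting into \eqref{def J} gives $J(0)=I_n$, because the rank-one term $\xi\otimes\xi$ vanishes and $(1+0)^{-1/2}=1$. Hence every vector is an eigenvector with eigenvalue $1$, and its multiplicity is $n$.

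For $\xi\neq0$, I will write $s=1+|\xi|^2$ and use the convention $(\xi\otimes\xi)v=\langle \xi,v\rangle\xi$. Then
\[
J(\xi)v=s^{-1/2}v-s^{-3/2}\langle\xi,v\rangle\xi .
\]
There are two cases.

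\emph{Eigenvalue along $\xi$.} Taking $v=\xi$ gives
\[
J(\xi)\xi=\bigl(s^{-1/2}-s^{-3/2}|\xi|^2\bigr)\xi=s^{-3/2}\bigl(s-|\xi|^2\bigr)\xi=s^{-3/2}\xi .
\]
So $\xi$ is an eigenvector with eigenvalue $(1+|\xi|^2)^{-3/2}$.

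\emph{Eigenvalue on $\xi^\perp$.} For any $v$ with $\langle\xi,v\rangle=0$, the rank-one term drops out and $J(\xi)v=s^{-1/2}v$. The orthogonal complement $\xi^\perp$ has dimension $n-1$, so $(1+|\xi|^2)^{-1/2}$ is an eigenvalue with geometric multiplicity at least $n-1$.

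To pin down the multiplicities exactly, I will use that $J(\xi)$ is symmetric and that $\mathbb R^n=\operatorname{span}\{\xi\}\oplus\xi^\perp$. Together these give a full orthogonal eigenbasis. The two eigenvalues are distinct because $s^{-3/2}<s^{-1/2}$ whenever $s>1$. Hence the eigenvalue $s^{-1/2}$ has multiplicity exactly $n-1$ and $s^{-3/2}$ has multiplicity exactly $1$.

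As a consequence, the product of the eigenvalues is $\det J(\xi)=s^{-(n-1)/2}s^{-3/2}=(1+|\xi|^2)^{-(n+2)/2}$. This is the identity \eqref{det J} used in the proof of Theorem \ref{PMC_main_result_}, and it also holds at $\xi=0$.

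There is no real obstacle here. The only step needing care is the exact multiplicity count, and it follows from the orthogonal decomposition together with the fact that the two eigenvalues are distinct.
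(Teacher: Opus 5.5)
Your proof is correct and follows essentially the same route as the paper: both split $\mathbb R^n=\operatorname{span}\{\xi\}\oplus\xi^\perp$, note that the rank-one term vanishes on $\xi^\perp$ and acts on $\xi$ as multiplication by $|\xi|^2/(1+|\xi|^2)$, and read off the eigenvalues from the resulting orthonormal eigenbasis; the paper writes this as an orthogonal diagonalization $HJ(\xi)H^{-1}$. Your explicit check that $s^{-3/2}<s^{-1/2}$ for $\xi\neq0$ is a small addition that makes the exact multiplicity count airtight, and your determinant computation correctly recovers \eqref{det J}, including at $\xi=0$.
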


\begin{proof} At $\xi=0$ the assertion follows from $J(0)=I_n$. Suppose $\xi\ne0$. The proof follows from direct calculation, and we provide details here for completeness. Rearranging \eqref{def J}, we have
\[
J(\xi) = \frac{1}{(1+|\xi|^2)^{1/2}}\left(I-\frac{\xi\otimes \xi}{1+|\xi|^2}\right).
\]
Note that for any $\alpha\in\mathbb R^n$ in the $n-1$ dimensional subspace orthogonal to $\xi$, 
\[
\frac{\xi\otimes\xi}{1+|\xi|^2}\alpha = \frac{\xi\langle\xi,\alpha\rangle}{1+|\xi|^2} = 0.
\]
Then, let $\alpha_1, \alpha_2, ...,\alpha_{n-1}$ be an orthonormal basis for this subspace. We can extend this to an orthonormal basis of $\mathbb{R}^n$ by adding $\hat{\xi}$, where $\hat{\xi}=\xi/|\xi|$ is a unit vector in the direction $\xi$. Thus,
\[
\frac{\xi\otimes\xi}{1+|\xi|^2}\hat\xi = \frac{\xi\langle\xi,\hat\xi\rangle}{1+|\xi|^2} = \frac{|\xi|^2}{1+|\xi|^2}\hat\xi.
\]
From the last two formulas, we see that there exists an orthonormal matrix $H$ such that
\[
H\left(\frac{\xi\otimes\xi}{1+|\xi|^2}\right)H^{-1} = \begin{bmatrix}
0 &\cdots&0  & 0\\
 \vdots& \ddots &\vdots &\vdots\\
 0&\cdots & 0 &0\\
 0& \cdots&0 & \frac{|\xi|^2}{1+|\xi|^2}
 \end{bmatrix}.
 \]
 Thus,
 \begin{align*}
 HJ( \xi)H^{-1} &=  \frac{1}{(1+|\xi|^2)^{1/2}}H\left(I-\frac{\xi\otimes \xi}{1+|\xi|^2}\right)H^{-1}\\
 &= \frac{1}{(1+|\xi|^2)^{1/2}}\left(I-H\left(\frac{\xi\otimes \xi}{1+|\xi|^2}\right)H^{-1}\right)\\
 &= \frac{1}{(1+|\xi|^2)^{1/2}}\begin{bmatrix}1 &\cdots&0  & 0\\
 \vdots& \ddots &\vdots &\vdots\\
 0&\cdots & 1 &0\\
 0& \cdots&0 & \frac{1}{1+|\xi|^2}\end{bmatrix}.
 \end{align*}
 The diagonal elements of $HJ( \xi)H^{-1}$ are exactly the eigenvalues of $J( \xi)$ as $H$ is a similarity transformation. Thus, the lemma is proved.
\end{proof}

\subsection{ABP Estimates for Mean Curvature Equations with Quadratic Gradient Growth}
We next include a normalized quadratic term. Its growth is at most linear as $|Du|$ tends to infinity; the numerator is quadratic in $Du$.
\begin{theorem}\label{quadratic-mean-curvature}
Let $A$ and $\mu$ satisfy Assumption \ref{A-assumption_}, let $B\in C(\overline\Omega)\cap L^n(\Omega,\mu)$, and let $C:\Omega\times\mathbb R\to\mathbb R^{n\times n}$ be continuous. Assume that $c\geq0$ belongs to $L^n(\Omega,\mu)$ and
\[
|\langle\xi,C(x,z)\xi\rangle|\leq c(x)|\xi|^2
\quad\text{for all }(x,z,\xi)\in\Omega\times\mathbb R\times\mathbb R^n.
\]
Suppose $f\in C(\Omega)$, $f^+\in L^n(\Omega,\mu)$, and $u\in C(\overline\Omega)\cap C^{1,1}_{\mathrm{loc}}(\Omega)$ is a viscosity sub-solution of
\[
-\operatorname{Tr}(\mathcal A(x,Du)D^2u)
+\frac{\langle B(x),Du\rangle}{\sqrt{1+|Du|^2}}
+\frac{\langle Du,C(x,u)Du\rangle}{\sqrt{1+|Du|^2}}=f(x).
\]
Set
\[
S=2^{n-1}\left(\|f^+\|_{L^n(\Omega,\mu)}^n
+\||B|+c\|_{L^n(\Omega,\mu)}^n\right)
\]
and define
\[
Q(r)=n^{n+1}\omega_n\int_0^r
\frac{\rho^{n-1}}{(1+\rho^2)^{(n+2)/2}(1+\rho)^n}\,d\rho.
\]
The function $Q$ is strictly increasing on $[0,\infty)$. We use the inverse
\[
Q^{-1}:[0,Q(\infty))\longrightarrow[0,\infty),\qquad Q^{-1}(0)=0.
\]
If $S<Q(\infty)$, then
\[
\sup_\Omega u^+\leq\sup_{\partial\Omega}u^+
+\operatorname{diam}(\Omega)Q^{-1}(S).
\]
\end{theorem}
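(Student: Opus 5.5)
We apply Theorem \ref{G-ABP-thm} with $p_0=n$, $\gamma_0=0$, and the weight $\mu$ from Assumption \ref{A-assumption_}. The leading part is $F(x,\xi,X)=-\operatorname{Tr}(\mathcal A(x,\xi)X)$. The lower-order part is
\[
G(x,z,\xi)=\frac{\langle B(x),\xi\rangle+\langle\xi,C(x,z)\xi\rangle}{\sqrt{1+|\xi|^2}},
\]
which is continuous because $B$ and $C$ are. For the determinant condition we reuse the proof of Theorem \ref{PMC_main_result_} verbatim. For $Y=-D^2\phi(x)\geq0$ and $P=\mathcal A(x,D\phi(x))$, the arithmetic--geometric mean inequality for $P^{1/2}YP^{1/2}$, together with $\det P=\det A\det J$, $\mu\det A\geq1$ and Lemma \ref{J-eigenvalues}, gives
\[
n(1+|\xi|^2)^{-(n+2)/(2n)}\det(-D^2\phi)^{1/n}\leq\mu^{1/n}F .
\]
This time we keep the factor $n$ and set $g_1(\xi)=n(1+|\xi|^2)^{-(n+2)/(2n)}$, which is bounded and positive.

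For the gradient weight, note that
\[
|G(x,z,\xi)|\leq\frac{|B(x)||\xi|+c(x)|\xi|^2}{\sqrt{1+|\xi|^2}}\leq(|B(x)|+c(x))\frac{|\xi|(1+|\xi|)}{\sqrt{1+|\xi|^2}}\leq(|B(x)|+c(x))(1+|\xi|).
\]
We therefore take $g_2(\xi)=(1+|\xi|)^{-1}$ and $b=|B|+c\in L^n(\Omega,\mu)$. Then $\|g_2\|_\infty=1$, and \eqref{gamma-def} with $\gamma_0=0$ gives $\gamma^n=S$. In polar coordinates,
\[
\|g_1g_2\|^n_{L^n(B_r)}=n^n\cdot n\omega_n\int_0^r\frac{\rho^{n-1}}{(1+\rho^2)^{(n+2)/2}(1+\rho)^n}\,d\rho=Q(r),
\]
using $|\partial B_1|=n\omega_n$. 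This explains the constant $n^{n+1}\omega_n$. The integrand is positive, so $Q$ is strictly increasing. Its decay like $\rho^{-n-3}$ makes $Q(\infty)$ finite, which is why the smallness hypothesis $S<Q(\infty)$, i.e.\ $\gamma<\|g_1g_2\|_{L^n(\mathbb R^n)}$, is needed.

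In the notation of \eqref{H-def} we have $H(s)=Q^{-1}(s^n)$, so $H(\gamma)=Q^{-1}(S)$. Theorem \ref{G-ABP-thm} then yields the claimed bound with $\max\{0,H(\gamma)\}=Q^{-1}(S)$.

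I expect no real obstacle here; the work is bookkeeping. The points to check are the following.
\begin{itemize}
\item The elementary bound $|\xi|(1+|\xi|)/\sqrt{1+|\xi|^2}\leq1+|\xi|$ holds, since $|\xi|\leq\sqrt{1+|\xi|^2}$.
\item The constant in $Q$ matches $\|g_1g_2\|_{L^n}^n$ exactly.
\item The case $S=0$ is harmless, because then $Q^{-1}(0)=0$ and Theorem \ref{G-ABP-thm} handles $\gamma=0$ directly.
\end{itemize}
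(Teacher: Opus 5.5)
Your proposal is correct and follows the paper's proof essentially line by line. It uses the same choices $p_0=n$, $\gamma_0=0$, $g_1(\xi)=n\det(J(\xi))^{1/n}$, $g_2(\xi)=(1+|\xi|)^{-1}$ and $b=|B|+c$, identifies $Q(r)=\|g_1g_2\|_{L^n(B_r)}^n$ and $\gamma^n=S$, and then applies Theorem \ref{G-ABP-thm}; your bookkeeping (the lower-order bound, the constant $n^{n+1}\omega_n$, and $H(s)=Q^{-1}(s^n)$) is accurate.
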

\begin{proof}
Take $p_0=n$, $\gamma_0=0$, $g_1(\xi)=n\det(J(\xi))^{\frac{1}{n}}$, and $g_2(\xi)=(1+|\xi|)^{-1}$. For $X\leq0$, put $P=\mathcal A(x,\xi)$. Apply the trace--determinant inequality to $P^{1/2}(-X)P^{1/2}$. Together with $\mu\det A\geq1$, this gives Assumption \ref{F-assumption}-\ref{F-assumption-2}, as in the proof of Theorem \ref{PMC_main_result_}. For the lower-order terms, we have
\[
\begin{split}
g_2(\xi)\left|\frac{\langle B(x),\xi\rangle}{\sqrt{1+|\xi|^2}}
+\frac{\langle\xi,C(x,z)\xi\rangle}{\sqrt{1+|\xi|^2}}\right|
&\leq |B(x)|+c(x)\frac{|\xi|^2}{(1+|\xi|)\sqrt{1+|\xi|^2}}\\
&\leq |B(x)|+c(x).
\end{split}
\]
Polar integration using \eqref{det J} gives
\[
Q(r)=\|g_1g_2\|_{L^n(B_r)}^n.
\] It is strictly increasing; its integrand is comparable to $\rho^{n-1}$ near zero and to $\rho^{-n-3}$ at infinity. Thus $0<Q(\infty)<\infty$, and we set $Q^{-1}(0)=0$. Since $\|g_2\|_\infty=1$, the $n$th power of $\gamma$ in \eqref{gamma-def} equals $S$. Theorem \ref{G-ABP-thm} gives the result.
\end{proof}

\section*{Acknowledgement}
J. Fang was partially supported by Spike Tickle STEM Endowed Fellowship (2025). J. Peltier was partially supported by National Merit Scholarship (2022) and Barry Goldwater Scholarship (2025). The preliminary results of this work are presented in J. Peltier's honors thesis, submitted to the University of Tennessee.  T. Phan was partially supported by Simons Foundation, grant \# 769369.


\end{document}